\newtheorem{theorem}{Theorem}[section]
\newtheorem{lemma}[theorem]{Lemma}
\newtheorem{proposition}[theorem]{Proposition}
\newtheorem{corollary}[theorem]{Corollary}
\theoremstyle{definition}
\newtheorem{remark}[theorem]{Remark}
\newtheorem{remarks}[theorem]{Remarks}
\def\@map#1#2[#3]{\mbox{$#1 \colon\thinspace #2 \longrightarrow #3$}}
\def\map#1#2{\@ifnextchar [{\@map{#1}{#2}}{\@map{#1}{#2}[#2]}}
\renewcommand{\p@enumii}{}
\def\@enum@{\list{\csname label\@enumctr\endcsname}%
	{\usecounter{\@enumctr}\def\makelabel##1{
			\normalfont\ignorespaces\emph{{##1}~}}
		\setlength{\labelsep}{3pt}
		\setlength{\parsep}{0pt}
		\setlength{\itemsep}{0pt}
		\setlength{\leftmargin}{0pt}
		\setlength{\labelwidth}{0pt}
		\setlength{\listparindent}{\parindent}
		\setlength{\itemsep}{0pt}
		\setlength{\itemindent}{0pt}
		\topsep=3pt plus 1pt minus 1 pt}}
\newcommand{\torus}{\mathbb{T}^2}
\newcommand{\klein}{\mathbb{K}^2}
\newcommand{\z}{\mathbb{Z}}
\newcommand{\rtwo}{\mathbb{R}^2}
\renewcommand{\hom}{{\rm Hom}}
\newcommand{\ab}{{\text{Ab}}}
\renewcommand{\to}{\ensuremath{\longrightarrow}}
\newcommand{\zn}{\mathbb{Z}_n}
\renewcommand{\ker}[1]{\ensuremath{\operatorname{\text{Ker}}\left({#1}\right)}}
\newcommand{\im}[1]{\ensuremath{\operatorname{\text{Im}}\left({#1}\right)}}
\renewcommand{\p@enumii}{}
\begin{document}
	
	\title{Free cyclic actions on surfaces and the Borsuk-Ulam theorem}
	
	\author{DACIBERG LIMA GON\c{C}ALVES
		~\footnote{Departamento de Matem\'atica, IME, Universidade de S\~ao Paulo, Rua do Mat\~ao 1010 CEP: 05508-090, S\~ao Paulo-SP, Brazil. 
			e-mail: \texttt{dlgoncal@ime.usp.br}}
		\and
		JOHN GUASCHI
		~\footnote{Normandie Univ, UNICAEN, CNRS, LMNO, 14000 Caen, France.
			e-mail: \texttt{john.guaschi@unicaen.fr}}
		\and
		VINICIUS CASTELUBER LAASS
		~\footnote{Departamento de Matem\'atica, IME, Universidade Federal da Bahia, Av.\ Milton Santos, S/N Ondina CEP: 40170-110, Salvador-BA, Brazil. 
			e-mail: \texttt{vinicius.laass@ufba.br}} 
	}
	
	
	\maketitle
	
	\begin{abstract}%
Let $M$ and $N$ be topological spaces, let $G$ be a group, and let $\tau \colon\thinspace G \times M \to M$ be a proper free action of $G$. In this paper, we define a Borsuk-Ulam-type property for homotopy classes of maps from $M$ to $N$ with respect to the pair $(G,\tau)$ that generalises the classical antipodal Borsuk-Ulam theorem of maps from the $n$-sphere $\mathbb{S}^n$ to $\mathbb{R}^n$. In the cases where $M$ is a finite pathwise-connected CW-complex, $G$ is a finite, non-trivial Abelian group, $\tau$ is a proper free cellular action, and $N$ is either $\mathbb{R}^2$ or a compact surface without boundary different of $\mathbb{S}^2$ and $\mathbb{RP}^2$, we give an algebraic criterion involving braid groups to decide whether a free homotopy class $\beta \in [M,N]$ has the Borsuk-Ulam property. As an application of this criterion, we consider the case where $M$ is a compact surface without boundary equipped with a free action $\tau$ of the finite cyclic group $\mathbb{Z}_n$. In terms of the orientability of the orbit space $M_\tau$ of $M$ by the action $\tau$, the value of $n$ modulo $4$ and a certain algebraic condition involving the first homology group of $M_\tau$, we are able to determine if the single homotopy class of maps from $M$ to $\mathbb{R}^2$ possesses the Borsuk-Ulam property with respect to $(\mathbb{Z}_n,\tau)$. Finally, we give some examples of surfaces on which the symmetric group acts, and for these cases, we obtain some partial results regarding the Borsuk-Ulam property for maps whose target is $\mathbb{R}^2$.
	\end{abstract}
	
	\begingroup
	\renewcommand{\thefootnote}{}
	\footnotetext{Key words: Borsuk-Ulam theorem, cyclic groups, braid groups, surfaces.}
	\endgroup
	

	\section{Introduction}\label{sec:introduction}
	
The famous Borsuk-Ulam theorem states that every continuous map from the $m$-dimensional sphere to the $m$-dimensional Euclidean space collapses a pair of antipodal points \cite[Satz II]{Borsuk}. We can state this result in terms of a free action of the cyclic group of order $2$ in the following way: {\it let $\z_2=\{\overline{0}, \overline{1} \}$ be the cyclic group of order $2$, and let $\tau \colon\thinspace \z_2 \times \mathbb{S}^m \to \mathbb{S}^m$ be the free action defined by $\tau(\overline{i},x)=(-1)^i x$ for all $i\in \{ 0,1\}$ and $x\in M$. If $f \colon\thinspace \mathbb{S}^m \to \mathbb{R}^m$ is a continuous map, then there exists $x \in \mathbb{S}^m$ such that $f(\tau(\overline{0},x))=f(\tau(\overline{1},x))$.} A very natural generalisation of this theorem is to replace $\mathbb{S}^m$ and $\mathbb{R}^m$ by manifolds $M$ and $N$, and replace the antipodal map by a free involution on $M$. One may then ask if any continuous map from $M$ to $N$ collapses an orbit of this $\z_2$-action. We will refer to this question as the Borsuk-Ulam problem for $M$ and $N$. We focus our discussion on the case where $M$ and $N$ are surfaces, for which much progress has been made over the past fifteen years with respect to this problem. In 2005, classical techniques of algebraic topology were used to obtain a complete answer to the problem for every compact surface $M$ without boundary equipped with a free involution and $N=\mathbb{R}^2$~\cite{Gon}. A few years later, the problem in the case where $M$ and $N$ are compact surfaces without boundary was studied, and using the theory of surface braid groups, a complete answer to the problem was given~\cite{GonGua}. To our knowledge, this was the first time in the literature that these groups were used to study Borsuk-Ulam-type problems. In a similar vein, the Borsuk-Ulam problem was solved in the case where the domain is the product of two compact surfaces with boundary equipped with the diagonal involution, and the target is $\mathbb{R}^n$~\cite{GonSan} or a compact surface without boundary~\cite{GonSanSil}, braid theory being used in the latter case. 
More recently, another natural and more refined problem that generalises the Borsuk-Ulam theorem arose, namely
the question of the classification of the free homotopy classes of maps between $M$ and $N$, where $M$ is equipped with a free $\z_2$-action for which every representative map of a given homotopy class collapses an orbit of the action.
This problem was solved by the authors in three recent papers in the cases where $M$ and $N$ are compact surfaces without boundary of Euler characteristic zero~\cite{GonGuaLaa1,GonGuaLaa2,GonGuaLaa3}.
The results depend on the choice of free involution on $M$, and once more, braid theory plays an important r\^ole in the solution of the problem.
	
In this paper, our aim is to initiate the study of another generalisation of the Borsuk-Ulam theorem by considering free actions of groups other than $\z_2$ on spaces as follows. 
Let $M$ and $N$ be topological spaces, and let $G$ be a non-trivial group for which there exists a free action $\tau \colon\thinspace G \times M \to M$ of $G$ on $M$. Following~\cite{GonGuaLaa1,GonGuaLaa2,GonGuaLaa3}, we say that a free homotopy class $\beta \in [M,N]$ \emph{has the Borsuk-Ulam property with respect to the pair $(G,\tau)$} if for every representative map $f\colon\thinspace M \to N$ of $\beta$, there exist $g_1,g_2 \in G$, where $g_1\neq g_2$, and $x \in M$ such that $f(\tau(g_1,x))=f(\tau(g_2,x))$, and that the quadruple $(M,G,\tau;N)$ \emph{satisfies the Borsuk-Ulam property} if every homotopy class of maps between $M$ and $N$ has the Borsuk-Ulam property with respect to $(G,\tau)$. For technical reasons, we also define the Borsuk-Ulam property for pointed homotopy classes. More precisely, a pointed homotopy class $\alpha \in [M,m_1;N,n_1]$ \emph{has the Borsuk-Ulam property with respect to the pair $(G,\tau)$} if for every representative map $f\colon\thinspace (M,m_1) \to (N,n_1)$
of $\alpha$ there exist $g_1,g_2 \in G$, where $g_1\neq g_2$, and $x \in M$ such that $f(\tau(g_1,x))=f(\tau(g_2,x))$.
	
The first case that we study is that of free actions of a finite cyclic group on a compact surface $M$ without boundary, and where $N$ is the Euclidean plane $\rtwo$. Note that in this situation, there is just one homotopy class, and in this sense, our results generalise those of~\cite{Gon}. We underline the important r\^{o}le of braid theory in our work.

To state Theorem~\ref{th:main}, which is the main result of this paper, we first recall some facts and notation. Given a pathwise-connected CW-complex $M$ and a proper free cellular action $\tau \colon\thinspace G \times M \to M$, where $G$ is a non-trivial discrete group, we say that two points $x_1,x_2 \in M$ are equivalent if there exists $g \in G$ such that $\tau(g,x_1)=x_2$. Let $M_\tau$ denote the quotient space (or orbit space) of $M$ by this action, and let $p_\tau \colon\thinspace M \to M_\tau$ denote the natural projection of $M$ onto $M_{\tau}$. Then $p_\tau$ is a regular covering map, and the associated group of deck transformations is $G$. Thus the space $M_\tau$ is also a pathwise-connected CW-complex whose dimension is that of $M$, and we have the following short exact sequence (we omit the basepoints):
	\begin{equation}\label{seq:main}
		\xymatrix{
			\{1 \} \ar[r] & \pi_1 (M) \ar[r]^-{(p_\tau)_\#} & \pi_1(M_\tau) \ar[r]^-{\theta_\tau} & \dfrac{\pi_1(M_\tau)}{(p_\tau)_\#(\pi_1(M))} \cong G \ar[r] & \{1 \}, \\
	}\end{equation}
	where $\theta_{\tau}$ is the natural projection.
	
Suppose now that $G$ is an Abelian group. Then the homomorphism $\theta_\tau$ of~(\ref{seq:main}) factors through the Abelianisation $(\pi_1(M_\tau))_{\ab}$ of $\pi_1(M_\tau)$ that is isomorphic to the first homology group $H_{1}(M_{\tau},\z)$ of $M_\tau$ with integral coefficients. We denote this factorisation by $(\theta_\tau)_\ab \colon\thinspace H_1(M_\tau) \to G$. If $M_\tau$ is a compact, non-orientable surface without boundary, let $\delta$ denote the unique element of $H_1 (M_\tau)$ of order two. The main goal of this paper is to prove the following theorem.
	
\begin{theorem}\label{th:main}
Let $M$ be a compact surface without boundary, 
 and let $\tau \colon\thinspace \z_n \times M \to M$ be a free action. Then the quadruple $(M,\z_n,\tau;\rtwo)$ has the Borsuk-Ulam property if and only if the following conditions are satisfied:
\begin{enumerate}[(1)]
\item $n \equiv 2 \bmod 4$.
\item $M_\tau$ is non-orientable, and $( \theta_\tau)_\ab (\delta)$ is non trivial.
\end{enumerate}
\end{theorem}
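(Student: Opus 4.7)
My plan is to invoke the algebraic criterion established earlier in the paper, reducing the problem to a braid-theoretic lifting question, and then to dispatch sufficiency by a parity obstruction and necessity by explicit construction of lifts. Specifically, since $\rtwo$ is aspherical and contractible, the criterion asserts that $(M,\z_n,\tau;\rtwo)$ has the Borsuk-Ulam property if and only if there is no homomorphism $\phi\colon\pi_1(M_\tau)\to\Gamma$ satisfying $p\circ\phi=\theta_\tau$, where $\Gamma\subseteq B_n(\rtwo)$ is the preimage of the cyclic subgroup $\z_n=\langle(1\,2\,\cdots\,n)\rangle\subseteq S_n$ under the natural projection and $p\colon\Gamma\to\z_n$ is the quotient by $P_n(\rtwo)$.

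\emph{Sufficiency.} Assume (1) and (2), and suppose for contradiction that a lift $\phi$ exists. Let $\epsilon\colon B_n(\rtwo)\to\z$ be the exponent-sum homomorphism $\sigma_i\mapsto 1$; then $(-1)^{\epsilon(\beta)}$ equals the sign of the image of $\beta$ in $S_n$. Since $n$ is even, the generator $(1\,2\,\cdots\,n)$ of $\z_n\subseteq S_n$ has sign $-1$, giving $\epsilon(\beta)\equiv p(\beta)\pmod 2$ for $\beta\in\Gamma$, hence $(\epsilon\circ\phi)(g)\equiv\theta_\tau(g)\pmod 2$ for all $g\in\pi_1(M_\tau)$. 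But $\epsilon\circ\phi$ factors through $H_1(M_\tau)$ and so vanishes on the $2$-torsion element $\delta$, while $\theta_\tau(\delta)=(\theta_\tau)_\ab(\delta)=n/2$ has odd mod-$2$ reduction under condition (1). This contradiction establishes the Borsuk-Ulam property.

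\emph{Necessity.} Let $\alpha=\sigma_1\sigma_2\cdots\sigma_{n-1}\in\Gamma$; this element projects to the generator of $\z_n$ and generates an infinite cyclic subgroup of $\Gamma$ (because $B_n(\rtwo)$ is torsion-free). Suppose one of (1), (2) fails. If $\theta_\tau$ vanishes on the torsion of $H_1(M_\tau)$ --- which covers the sub-cases $M_\tau$ orientable, $M_\tau$ non-orientable with $(\theta_\tau)_\ab(\delta)=0$, and $n$ odd (the last case automatically, since then $\z_n$ has no $2$-torsion) --- then $\theta_\tau$ admits an integer lift $\widetilde\theta\colon\pi_1(M_\tau)\to\z$ with $\widetilde\theta\equiv\theta_\tau\pmod n$, and we set $\phi(g):=\alpha^{\widetilde\theta(g)}$. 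The remaining sub-case is $n\equiv 0\pmod 4$ with $M_\tau$ non-orientable and $(\theta_\tau)_\ab(\delta)=n/2$, in which the parity obstruction of the preceding paragraph vanishes because $n/2$ is now even. Using the presentation $\pi_1(M_\tau)=\langle a_1,\ldots,a_k\mid a_1^2\cdots a_k^2\rangle$, I seek $\phi(a_i)=\alpha^{m_i}\gamma_i$ with $\gamma_i\in P_n(\rtwo)$ and $m_i$ integer lifts of $\theta_\tau(a_i)$. Expanding the surface relation $\phi(a_1)^2\cdots\phi(a_k)^2=1$ using $\alpha^n=\Delta^2$ (the full twist) produces an equation for the $\gamma_i$ in $P_n(\rtwo)$ involving the cyclic conjugation action of $\alpha$; reducing to the abelianisation $(P_n(\rtwo))^\ab\cong\z^{n(n-1)/2}$ with its $\z_n$-permutation action turns this into a $\z[\z_n]$-linear system whose consistency is guaranteed by $n/2$ being even, and the lift back to $P_n(\rtwo)$ is obtained via the iterated Fadell--Neuwirth fibration structure of the configuration space together with the centrality of $\Delta^2$.

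\emph{Main obstacle.} I expect the principal technical step to be this final sub-case of necessity: although the abelian-level obstruction vanishes, exhibiting a genuine element of the non-abelian pure braid group $P_n(\rtwo)$ requires careful handling of commutator corrections under the cyclic conjugation $\psi=\operatorname{conj}(\alpha)$ on $P_n(\rtwo)$, and this will constitute the most delicate computation of the proof.
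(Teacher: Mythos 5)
Your reduction to the lifting criterion is exactly the paper's Corollary~\ref{cor:borsuk_braid_r2}, and two of your three blocks are sound. The sufficiency argument via the exponent sum $\epsilon\colon B_n(\rtwo)\to\z$, $\sigma_i\mapsto 1$, is correct and is in fact a mild streamlining of the paper's proof of Proposition~\ref{prop:3}: the paper uses instead the evaluation map $\varepsilon$ on $P_n(\rtwo)$ with $\varepsilon(A_{i,j})=1$ (so $\epsilon=2\varepsilon$ on $P_n(\rtwo)$) and expands the surface relation to get $2\varepsilon(w_{\widehat{\delta}})+\varepsilon(\Delta_n^2)=0$ with $\varepsilon(\Delta_n^2)=(2k+1)(4k+1)$ odd; your observation that a homomorphism to $\z$ must kill the $2$-torsion class $\delta$ reaches the same contradiction without the genus case split. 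Likewise your first necessity sub-case (lift $\theta_\tau$ to $\z$ when it kills the torsion of $H_1(M_\tau)$, then map into $\langle\alpha\rangle$) is exactly the content of Proposition~\ref{prop:1}.

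The genuine gap is the remaining sub-case of necessity: $n\equiv 0\bmod 4$, $M_\tau$ non-orientable, $(\theta_\tau)_\ab(\delta)=\overline{n/2}$. Here no lift of $\theta_\tau$ to $\z$ exists, so you must produce a non-cyclic section-like homomorphism into $B_{\z_n}(\rtwo)$, and your plan --- solve the surface relation in the abelianisation $(P_n(\rtwo))_{\ab}\cong\z^{n(n-1)/2}$ as a $\z[\zn]$-linear system and then ``lift back'' using the Fadell--Neuwirth fibrations --- is not a proof. You give no argument that the abelianised system is consistent (the assertion that consistency ``is guaranteed by $n/2$ being even'' is unsubstantiated), and, more seriously, a solution modulo $[P_n(\rtwo),P_n(\rtwo)]$ does not lift to a solution in $P_n(\rtwo)$ without controlling exactly the commutator corrections you defer; this is where the actual difficulty lives. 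The paper avoids this computation entirely by a geometric device (Lemma~\ref{lem:geometry}): an explicit free $\z_{4k}$-action on $\torus$ with orbit space $\klein$ together with an explicit coincidence-free map $h\colon\torus\to\rtwo$, which by the criterion yields elements $\alpha,\beta\in B_{\z_{4k}}(\rtwo)$ with $\pi_2(\alpha)=\overline{2k}$, $\pi_2(\beta)=\overline{1}$ and $\alpha\beta\alpha\beta^{-1}=1$ (Lemma~\ref{lem:algebra}); these Klein-bottle braids are then used to define $\psi$ on a general non-orientable $M_\tau$ (Proposition~\ref{prop:2}). Until you either carry out the non-abelian lifting or substitute such a construction, your argument does not establish the theorem in this case.
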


Notice that for $M= \mathbb{S}^2$ and $n=2$, Theorem~\ref{th:main} is a special case of the classical Borsuk-Ulam theorem.

 Following Theorem~\ref{th:main}, it is natural to ask what happens for actions of finite non-cyclic groups. Our techniques may be can be applied in part to this more general case, and in Section~\ref{sec:exam_sigma1}, we illustrate their use by giving a solution to the Borsuk-Ulam problem in certain cases where $G$ is the symmetric group $\Sigma_n$, where $n\geq 2$. More precisely, in Proposition~\ref{prop:ex121}, we show that for all $n\geq 2$, there exist actions of $\Sigma_n$ on two compact surfaces $M_1$ and $M_2$ without boundary. In the case of $M_{1}$, the Borsuk-Ulam property does not hold for maps into $\mathbb{R}^2$, while in the case of $M_{2}$, we do not know the answer in general.
	
Apart from this Introduction, this paper has four sections. In Theorem~\ref{th:borsuk_braid} of Section~\ref{sec:preli}, we give an algebraic criterion involving configuration spaces and braid groups that enables us to decide whether a given homotopy class of maps from $M$ to $N$ has the Borsuk-Ulam property, where $M$ is a connected manifold equipped with a free action of an Abelian group $G$, and $N$ is either the Euclidean plane $\mathbb{R}^{2}$ or a compact surface without boundary different from the $2$-sphere $\mathbb{S}^{2}$ and the real projective plane $\mathbb{R}P^{2}$. 
 One
implication of the condition of  Theorem~\ref{th:borsuk_braid} is
valid in general, while for the other implication, at present, we only
know that it holds in the case where $G$ is Abelian. This is the
reason for which we do not know presently if the surface $M_{2}$ of
the previous paragraph satisfies the Borsuk-Ulam property. In Section~\ref{sec:braid_group}, in Theorem~\ref{th:bzn_rtwo} we exhibit a presentation for the braid group that arises in the statement of Theorem~\ref{th:borsuk_braid} in the case where $G$ is cyclic of order $n$ and $N=\mathbb{R}^{2}$. Section~\ref{sec:proof} is devoted to the proof of Theorem~\ref{th:main}.  Finally in Section~\ref{sec:exam_sigma1}, we discuss the above-mentioned examples of two surfaces on which $\Sigma_n$ acts freely with regard to the Borsuk-Ulam property.

	
	\section{Preliminaries and Generalities}\label{sec:preli}
	
	We start by stating and proving the following result that is an algebraic and homotopy theoretical characterisation of the set of pointed homotopy classes of maps between pairs of $CW$-complexes. It generalises~\cite[Theorem~4]{GonGuaLaa1} in some sense, and it will be used in the proof of Lemma~\ref{lem:equiv_maps}.
	
	\begin{theorem}\label{th:homotopy_algebra}
		Let $(M,m_1)$ and $(N,n_1)$ be pointed pathwise-connected $CW$-complexes such that $\pi_i(N,n_1)$ is trivial for $2 \leq i \leq \dim(M)$. Then the map $\Gamma\colon\thinspace  [M,m_1;N,n_1] \to \hom( \pi_1(M,m_1),\pi_1(N,n_1))$ defined by $\Gamma \left( [f] \right)=f_\#$ is a bijection.
	\end{theorem}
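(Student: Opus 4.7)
The argument is a classical application of obstruction theory. The hypothesis that $\pi_{i}(N,n_{1})$ is trivial for $2 \leq i \leq \dim(M)$ means that, from the point of view of maps out of a $CW$-complex of dimension at most $\dim(M)$, the space $N$ is indistinguishable from an Eilenberg-MacLane space $K(\pi_{1}(N,n_{1}),1)$, for which pointed homotopy classes of maps are classified by the induced homomorphism on $\pi_{1}$. The plan is to verify this directly, by building maps and homotopies via induction on the skeleta. Well-definedness of $\Gamma$ is immediate from the functoriality of $\pi_{1}$ under pointed homotopies, and after replacing $M$ by a pointed homotopy equivalent complex if necessary, we may assume that $m_{1}$ is the unique $0$-cell of $M$.

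For surjectivity, let $\phi \tp \pi_{1}(M,m_{1}) \to \pi_{1}(N,n_{1})$ be a homomorphism. Since $m_{1}$ is the only $0$-cell, the group $\pi_{1}(M^{(1)},m_{1})$ is free on the set of $1$-cells of $M$, and the inclusion $M^{(1)} \hookrightarrow M$ induces a surjection onto $\pi_{1}(M,m_{1})$. Define $f$ on $M^{(1)}$ by sending each $1$-cell (a loop at $m_{1}$) to a based loop in $N$ representing the image under $\phi$ of its natural class in $\pi_{1}(M,m_{1})$. The obstruction to extending a partial map defined on $M^{(k-1)}$ over a single $k$-cell with attaching map $\varphi$ is the homotopy class of $f \circ \varphi$ in $\pi_{k-1}(N,n_{1})$. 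For $k = 2$ this class vanishes because $\phi$ descends to a well-defined homomorphism on $\pi_{1}(M,m_{1})$, which is obtained from $\pi_{1}(M^{(1)},m_{1})$ by killing the attaching maps of the $2$-cells; for $3 \leq k \leq \dim(M)$ the obstruction group $\pi_{k-1}(N,n_{1})$ is itself trivial by hypothesis. Hence $f$ extends to a pointed map $(M,m_{1}) \to (N,n_{1})$, which satisfies $f_{\#} = \phi$ by construction.

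For injectivity, let $f,g \tp (M,m_{1}) \to (N,n_{1})$ satisfy $f_{\#} = g_{\#}$. A pointed homotopy $H \tp M \times [0,1] \to N$ between $f$ and $g$ is constructed by extending the partial map already defined on $Y = (M \times \{0,1\}) \cup (\{m_{1}\} \times [0,1])$ via $f$, $g$ and the constant $n_{1}$, respectively. The cells of $M \times [0,1]$ missing from $Y$ are products $e^{j} \times [0,1]$ for $j$-cells $e^{j}$ of $M$ with $j \geq 1$. For $j = 1$ the required extension amounts to producing a based homotopy in $N$ between $f \circ e^{1}$ and $g \circ e^{1}$, which exists because $f_{\#}$ and $g_{\#}$ agree on the generator of $\pi_{1}(M^{(1)},m_{1})$ associated to $e^{1}$. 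For $2 \leq j \leq \dim(M)$, the obstruction to extending over $e^{j} \times [0,1]$ lies in $\pi_{j}(N,n_{1})$, which is trivial by hypothesis. Hence $H$ extends to all of $M \times [0,1]$, proving that $[f]=[g]$. The argument is conceptually straightforward, so the main obstacle is really only the indexing; the noteworthy point is that the vanishing range on $\pi_{i}(N,n_{1})$ is exactly sharp, surjectivity consuming $\pi_{2}, \ldots, \pi_{\dim(M)-1}$ and injectivity additionally consuming $\pi_{\dim(M)}$, together reproducing the interval $2 \leq i \leq \dim(M)$ in the hypothesis.
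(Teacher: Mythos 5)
Your proof is correct, and it reaches the result by a more hands-on route than the paper. The paper factors $\Gamma$ through the first Postnikov stage: it takes $h \colon N \to \widetilde{N}=K(\pi_1(N),1)$ inducing an isomorphism on fundamental groups, observes that the hypothesis makes $h_\#$ an isomorphism on $\pi_i$ for $1 \leq i \leq \dim(M)$ and a surjection for $i=\dim(M)+1$, and then invokes Whitehead's theorems (Chapter~VI, Theorems~6.12 and~6.13 for the bijection $[M,m_1;N,n_1] \to [M,m_1;\widetilde{N},h(n_1)]$, and Chapter~V, Theorem~4.3 for the classification of pointed maps into an aspherical space by $\pi_1$). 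You instead run the obstruction-theoretic induction over the skeleta of $M$ and of $M\times[0,1]$ directly, which is exactly the argument hiding inside those cited theorems. What the paper's route buys is brevity and a clean reduction to two standard references; what yours buys is self-containment and explicit bookkeeping, in particular the observation that the vanishing range $2 \leq i \leq \dim(M)$ is exactly consumed, surjectivity using the triviality of $\pi_2,\ldots,\pi_{\dim(M)-1}$ and injectivity additionally that of $\pi_{\dim(M)}$ --- a sharpness the citation-based proof leaves implicit. The only step worth flagging is your normalisation to a single $0$-cell: this is legitimate for a pathwise-connected CW-complex, but it deserves a word of justification (a pointed homotopy equivalence obtained by collapsing a maximal tree in the $1$-skeleton); with that in place the cell-by-cell extensions go through as you describe.
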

	
	\begin{proof}
		Let $\widetilde{N}=K(\pi_1(N),1)$ be the first stage of the Postnikov tower of $N$, and let $h\colon\thinspace N \to \widetilde{N}$ be a map that induces an isomorphism on the level of fundamental groups. By the definition of $h$ and the hypothesis, the homomorphism $h_\#\colon\thinspace \pi_i(N,n_1) \to \pi_i(\widetilde{N},h(n_1))$ is an isomorphism if $1 \leq i \leq \dim(M)$, and it is surjective if $i=\dim(M) +1$. From these observations, standard arguments of homotopy theory and~\cite[Chapter~VI, Theorems~6.12 and~6.13]{White}, the map $\Gamma_1 \colon\thinspace [M,m_1;N,n_1] \to [M,m_1;\widetilde{N},h(n_1)] $ defined by $\Gamma_1\left( [f] \right)=[h \circ f]$ is a bijection. Further, the map $\Gamma_2 \colon\thinspace [M,m_1;\widetilde{N},h(n_1)] \to \hom( \pi_1(M,m_1),\pi_1(\widetilde{N},h(n_1)))$ given by $\Gamma_2 \left( [g] \right)=g_\#$ is a bijection by~\cite[Chapter~V, Theorem~4.3]{White}. Now let $\Gamma_3 \colon\thinspace \hom( \pi_1(M,m_1),\pi_1(\widetilde{N},h(n_1))) \to \hom( \pi_1(M,m_1),\pi_1(N,n_1))$ be the map defined by $\Gamma_3 \left( \varphi \right)=\left(h_\#\right)^{-1} \circ \varphi$. Then $\Gamma_3$ is a bijection, and the result follows by taking $\Gamma=\Gamma_3 \circ \Gamma_2 \circ \Gamma_1$.
	\end{proof}
	
	Let $M$ and $N$ be pathwise-connected $CW$-complexes, and let $G$ be a non-trivial discrete group for which there exist proper free cellular actions $\tau \colon\thinspace G \times M \to M$ and $\tau_1 \colon\thinspace G \times N \to N$. A map $f \colon\thinspace M \to N$ is said to be \emph{$(\tau,\tau_1)$-equivariant} if  $f(\tau(g,x))=\tau_1(g,f(x))$ for all $g \in G$ and all $x \in M$. Given a pointed homotopy class $\alpha =[f]\in [M,m_1;N,n_1]$, let $\alpha_\#=f_\# \colon\thinspace \pi_1(M,m_1) \to \pi_1(N,n_1)$. Note that the homomorphism $\alpha_\#$ is independent of the choice of the representative map $f$ of $\alpha$. The following result provides an algebraic criterion to decide whether a pointed homotopy class has an equivariant representative map, and it generalises~\cite[Lemma 5]{GonGuaLaa1}.
	
	\begin{lemma}\label{lem:equiv_maps}
		Let $(M,m_1)$ and $(N,n_1)$ be pointed pathwise-connected $CW$-complexes (resp.\ compact surfaces without boundary) such that $\pi_i (N, n_1)$ is trivial for $2\leq i\leq \dim(M)$, let $G$ be a non-trivial group for which there exist proper free cellular actions $\tau \colon\thinspace G \times M \to M$ and $\tau_1 \colon\thinspace G \times N \to N$, and let $\alpha \in [M, m_1; N, n_1]$ be a pointed homotopy class. Then the following conditions are equivalent:
		\begin{enumerate}
			\item\label{it:equiv_maps1} there exists a representative map (resp.\ homeomorphism) $f \colon\thinspace (M, m_1) \to (N, n_1)$ of $\alpha$ that is $(\tau, \tau_1)$-equivariant.
			
			\item\label{it:equiv_maps2} there exists a homomorphism (resp.\ isomorphism) $\psi \colon\thinspace  \pi_1 (M_{\tau}, p_{\tau}(m_1)) \to \pi_1 (N_{\tau_1}, p_{\tau_1} (n_1))$ such that the following diagram is commutative: 
			\begin{equation}\label{diag:diag_equiv}\begin{gathered}\xymatrix{
						\pi_1(M,m_1) \ar[rr]^-{\alpha_\#} \ar[d]_-{(p_\tau)_\#}  & & \pi_1 (N, n_1) \ar[d]^-{( p_{\tau_1})_\#} \\
						\pi_1(M_\tau, p_\tau(m_1)) \ar@{.>}[rr]^-{\psi} \ar[rd]_-{\theta_\tau} &  & \pi_1 (N_{\tau_1}, p_{\tau_1} (n_1)) \ar[ld]^-{\theta_{\tau_1}}  \\
						& G .&
			}\end{gathered}\end{equation}
		\end{enumerate}
	\end{lemma}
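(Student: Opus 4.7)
My plan is to exploit covering space theory in both directions of the equivalence, using Theorem~\ref{th:homotopy_algebra} to pass between pointed homotopy classes of maps and homomorphisms on $\pi_1$, and to use the uniqueness of lifts of maps into covering spaces to promote a homotopy-theoretic datum into an on-the-nose equivariance.

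For (\ref{it:equiv_maps1})$\Rightarrow$(\ref{it:equiv_maps2}), an equivariant representative $f\colon\thinspace (M,m_1)\to(N,n_1)$ of $\alpha$ sends $G$-orbits to $G$-orbits, and so descends to a pointed map $\bar f\colon\thinspace (M_\tau,p_\tau(m_1))\to(N_{\tau_1},p_{\tau_1}(n_1))$ satisfying $p_{\tau_1}\circ f=\bar f\circ p_\tau$. Taking $\psi=\bar f_\#$, commutativity of the upper square of (\ref{diag:diag_equiv}) is immediate from functoriality. For the lower triangle, given $[\gamma]\in\pi_1(M_\tau,p_\tau(m_1))$, lift $\gamma$ to a path $\tilde\gamma$ in $M$ starting at $m_1$; by definition of $\theta_\tau$ it ends at $\tau(\theta_\tau([\gamma]),m_1)$. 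Since $f\circ\tilde\gamma$ is the $p_{\tau_1}$-lift of $\bar f\circ\gamma$ starting at $n_1$, the equivariance of $f$ forces it to end at $\tau_1(\theta_\tau([\gamma]),n_1)$, so the definition of $\theta_{\tau_1}$ yields $\theta_{\tau_1}(\psi([\gamma]))=\theta_\tau([\gamma])$, as required.

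For (\ref{it:equiv_maps2})$\Rightarrow$(\ref{it:equiv_maps1}), since $p_{\tau_1}$ is a covering map we have $\pi_i(N_{\tau_1})\cong\pi_i(N)$ for $i\geq 2$, so the hypotheses of Theorem~\ref{th:homotopy_algebra} transfer to the pair $(M_\tau,N_{\tau_1})$; applied to $\psi$, it produces a pointed map $\bar f\colon\thinspace (M_\tau,p_\tau(m_1))\to(N_{\tau_1},p_{\tau_1}(n_1))$ with $\bar f_\#=\psi$. The standard lifting criterion for $p_{\tau_1}$ applied to $\bar f\circ p_\tau$ is satisfied because the upper square yields $(\bar f\circ p_\tau)_\#(\pi_1(M))=(p_{\tau_1})_\#\circ\alpha_\#(\pi_1(M))\subseteq(p_{\tau_1})_\#(\pi_1(N))$, and hence there is a pointed lift $f\colon\thinspace (M,m_1)\to(N,n_1)$ of $\bar f\circ p_\tau$. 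The identity $(p_{\tau_1})_\#\circ f_\#=\bar f_\#\circ(p_\tau)_\#=(p_{\tau_1})_\#\circ\alpha_\#$ together with the injectivity of $(p_{\tau_1})_\#$ gives $f_\#=\alpha_\#$, and then Theorem~\ref{th:homotopy_algebra} applied to the pair $(M,N)$ yields $[f]=\alpha$.

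It remains to verify that $f$ is $(\tau,\tau_1)$-equivariant, and this is where the lower triangle of (\ref{diag:diag_equiv}) is used in an essential way. For each $g\in G$ the two maps $f\circ\tau(g,\cdot)$ and $\tau_1(g,\cdot)\circ f$ both project under $p_{\tau_1}$ to $\bar f\circ p_\tau$, so by uniqueness of lifts it suffices to show that they coincide at the single point $m_1$. Choose a path $\eta$ in $M$ from $m_1$ to $\tau(g,m_1)$; then $\theta_\tau([p_\tau\circ\eta])=g$, while $f\circ\eta$ is the $p_{\tau_1}$-lift of $\bar f\circ p_\tau\circ\eta$ starting at $n_1$, whose class in $\pi_1(N_{\tau_1},p_{\tau_1}(n_1))$ is $\psi([p_\tau\circ\eta])$. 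The lower triangle then sends this class under $\theta_{\tau_1}$ to $g$, forcing $f(\tau(g,m_1))=\tau_1(g,n_1)=\tau_1(g,f(m_1))$ and completing the proof of equivariance. For the surface/homeomorphism version I would additionally invoke the classical Dehn-Nielsen-Baer type realisation theorem for compact aspherical surfaces, which applies because $\pi_2(N)=0$ forces $M_\tau$ and $N_{\tau_1}$ to be aspherical, in order to choose $\bar f$ as a homeomorphism realising the isomorphism $\psi$; the lift $f$ is then automatically a homeomorphism. The main subtlety is resisting the temptation to work only up to homotopy, since the lemma demands equivariance on the nose, which is exactly what the lifting-uniqueness argument above delivers.
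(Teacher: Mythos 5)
Your proof is correct and follows essentially the same route as the paper's: descend to $\overline{f}$ and read off $\psi$ in one direction, and in the other use Theorem~\ref{th:homotopy_algebra} to realise $\psi$ by $\overline{f}$, lift $\overline{f}\circ p_\tau$ through $p_{\tau_1}$, and verify equivariance first at $m_1$ via the lower triangle. The only (harmless) differences are that you invoke uniqueness of lifts for maps globally where the paper re-derives it pointwise via path lifting, and you cite a Dehn--Nielsen--Baer-type realisation where the paper cites Zieschang--Vogt--Coldewey.
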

	

	\begin{proof}
		We first prove that condition~(\ref{it:equiv_maps1}) implies condition~(\ref{it:equiv_maps2}). Let $f\colon\thinspace  (M, m_1) \to (N,n_1)$ be a $(\tau, \tau_1)$-equivariant representative map of $\alpha$. Then the map $\overline{f}\colon\thinspace (M_\tau,p_\tau(m_1)) \to (N_{\tau_1}, p_{\tau_1} (n_1))$ between the corresponding orbit spaces defined by $\overline{f}(y)= p_{\tau_1}\circ f(x)$ for all $y\in M_{\tau}$, where $x\in p_{\tau}^{-1}(y)$, is well defined, and the following diagram is commutative:
		\begin{equation}\label{diag:diag_equiv_aux}\begin{gathered}\xymatrix{
					(M, m_1) \ar[r]^-{f} \ar[d]_-{p_\tau} & (N,n_1) \ar[d]^-{p_{\tau_1}} \\
					(M_\tau, p_\tau(m_1)) \ar[r]^-{\overline{f}} & (N_{\tau_1},
					p_{\tau_1} (n_1)).
		}\end{gathered}\end{equation}
		Let $\psi \colon\thinspace \pi_1 (M_\tau, p_\tau(m_1)) \to \pi_1 (N_{\tau_1}, p_{\tau_1} (n_1))$ denote the homomorphism induced by $\overline{f}$. The diagram~(\ref{diag:diag_equiv_aux}) then gives rise to the upper square of the diagram~(\ref{diag:diag_equiv}). It remains to show that $\theta_{\tau}=\theta_{\tau_{1}}\circ \psi$. To see this, let $a \in \pi_1(M_\tau, p_\tau(m_1))$, let $\alpha$ be a loop in $M_{\tau}$ based at $p_{\tau}(m_{1})$ such that $a=[\alpha]$, let $\alpha'$ be the loop in $M_{\tau}$ based at $p_{\tau}(m_{1})$ given by $\alpha'=\overline{f}\circ \alpha$, let $g=\theta_{\tau}(a)$, and let $\xi\colon\thinspace [0,1]\to M$ be the lift of $\alpha$ to $M$ for which $\xi(0)=m_{1}$. By commutativity of~(\ref{diag:diag_equiv_aux}), $\alpha'=\overline{f}\circ p_{\tau}\circ \xi=p_{\tau_{1}}(f\circ \xi)$, and since $f(\xi(0))=f(m_{1})=n_{1}$, $f\circ \xi\colon\thinspace [0,1]\to N$ is the lift to $N$ of $\alpha'$ whose initial point is $n_{1}$. Using the construction of the short exact sequence~(\ref{seq:main}), we have $\xi(1)=\tau(g,m_{1})$. It follows from the fact that $f$ is $(\tau,\tau_{1})$-equivariant that $f\circ \xi(1)=f(\tau(g,\xi(1)))=\tau_{1}(g,f\circ\xi(1))$, and so $\theta_{\tau_{1}}([\alpha'])=g$, from which we see that $\theta_{\tau_{1}}\circ \psi([\alpha])=\theta_{\tau_{1}}([\alpha'])= g=\theta_{\tau}(g)$ as required. Further, if $f$ is a homeomorphism, then $\overline{f}$ is too, and $\psi=\overline{f}_\#$ is an isomorphism. 

		
		
		We now prove that condition~(\ref{it:equiv_maps2}) implies condition~(\ref{it:equiv_maps1}). Since $p_{\tau_1}\colon\thinspace N \to N_{\tau_1}$ is a covering map, the triviality of $\pi_i (N, n_1)$ for $2 \leq i \leq \dim(M) $ implies that of  $\pi_i (N_{\tau_1}, p_{\tau_1}(n_1)) $ for all $2 \leq i \leq \dim(M)$. It follows from Theorem~\ref{th:homotopy_algebra} that there exists a map $\overline{f}\colon(M_\tau, p_\tau(m_1)) \to (N_{\tau_1}, p_{\tau_1} (n_1))$ such that $\overline{f}_\#=\psi$, and so by~(\ref{diag:diag_equiv}), there exists a lift $f\colon\thinspace (M, m_1) \to (N, n_1)$ of the map $\overline{f} \circ p_\tau$ for the covering $p_{\tau_1}$, from which we obtain the commutative diagram~(\ref{diag:diag_equiv_aux}). The injectivity of $(p_{\tau_1})_{\#}$ implies that $\alpha_\#=f_\#$, and so $f$ is a representative map of $\alpha$ by Theorem~\ref{th:homotopy_algebra}. Let us show that $f$ is $(\tau,\tau_1)$-equivariant. We will do this in two steps. We first claim that $f(\tau(g,m_1))= \tau_1(g,f(m_1))$ for all $g \in G$. To see this, let $\gamma \colon\thinspace [0,1] \to M$ be a path such that $\gamma(0)=m_1$ and $\gamma(1)=\tau(g,m_1)$. Then $\alpha=p_{\tau} \circ \gamma$ is a loop in $M_{\tau}$  based at $p_{\tau}(m_1)$ such that $\theta_{\tau}( [\alpha])=g$. The commutative diagrams~(\ref{diag:diag_equiv}) and~(\ref{diag:diag_equiv_aux}) imply that $\overline{f} \circ \alpha$ is a loop in $N_{\tau_1}$ based at $p_{\tau_1}(n_1)$, and that:
		\begin{equation*}
			\theta_{\tau_1}( [\overline{f} \circ \alpha])
			= (\theta_{\tau_1} \circ \overline{f}_\#)([\alpha])
			= \theta_{\tau}([\alpha])=g.
		\end{equation*}
		So if $\gamma_1 \colon\thinspace [0,1] \to N$ is the lift of $\overline{f} \circ \alpha$ by the covering $p_{\tau_1}$ such that $\gamma_1(0)=n_1$, then $\gamma_1(1)=\tau_1(g,n_1)$. Now $(f \circ \gamma)(0)=f(m_1)=n_1$ and:
		\begin{equation*}
			p_{\tau_1} \circ (f \circ \gamma)
			= \overline{f} \circ p_{\tau} \circ \gamma
			= \overline{f} \circ \alpha
			= p_{\tau_1} \circ \gamma_1.
		\end{equation*}
		It follows from the path lifting property that $f\circ \gamma =\gamma_1$, and hence:
		\begin{equation}\label{eq:equiv_m1}
			f(\tau(g,m_1))
			= (f \circ \gamma)(1)
			= \gamma_1(1)
			= \tau_1(g,n_1)
			= \tau_1(g,f(m_1)),
		\end{equation}
		which proves the claim. Now let $x\in M$, let $g\in G$, let $\xi \colon\thinspace [0,1] \to M$ be a path from $m_1$ to $x$, and let $\xi_1,\xi_2 \colon\thinspace [0,1] \to N$ be the paths defined by $\xi_1(t)=f(\tau(g,\xi(t)))$ and $\xi_2(t)=\tau_1 (g,f \circ \xi(t))$. Then:
		\begin{equation}\label{eq:xiinit}
			\xi_1(0)
			= f(\tau(g,\xi(0)))
			= f(\tau(g,m_1))
			\stackrel{(\ref{eq:equiv_m1})}{=} \tau_1(g,f(m_1))
			= \tau_1 (g,f \circ \xi(0))
			= \xi_2(0).
		\end{equation}
		Using~(\ref{diag:diag_equiv_aux}) and the definition of $p_{\tau}$ and $p_{\tau_{1}}$, for all $t\in [0,1]$, we have:
		\begin{align*}
			p_{\tau_{1}}\circ \xi_{1}(t)&=p_{\tau_{1}}\circ f(\tau(g,\xi(t)))=\overline{f}\circ p_{\tau}(\tau(g,\xi(t)))=\overline{f}\circ p_{\tau}(\xi(t))= p_{\tau_{1}}(f \circ \xi(t))= p_{\tau_{1}}(\tau_1 (g,f \circ \xi(t)))\\
			&= p_{\tau_{1}}\circ \xi_{2}(t).
		\end{align*}
		Thus $\xi_1$ and $\xi_2$ are lifts of the path $p_{\tau_1} \circ \xi_1$ by the covering $p_{\tau_1}$ that have the same initial point by~(\ref{eq:xiinit}), and therefore $\xi_1 =\xi_2$ by the path lifting property. From this, we conclude that:
		\begin{equation*}
			f(\tau(g,x))
			= f(\tau(g,\xi(1)))
			= \xi_1(1)
			= \xi_2(1)
			= \tau_1 (g,f \circ \xi(1))
			= \tau_1 (g,f (x)),
		\end{equation*}
		and $f$ is indeed $(\tau,\tau_1)$-equivariant. Finally, if $M$ and $N$ are compact surfaces without boundary and $\psi$ is an isomorphism, it follows from the classification theorem for surfaces and~\cite[Theorem~5.6.2]{Zies} that $\psi$ is induced by a homeomorphism. So we may take $\overline{f}$ to be a homeomorphism, in which case $f$ is also a homeomorphism.
	\end{proof}

	
	Let $S$ be either the Euclidean plane or a compact surface without boundary. Recall that $F_n(S)=\{(x_1,\ldots,x_n) \in S^{n} \, \mid \, \text{$x_i \neq x_j$ if $i \neq j$} \}$ is 
	the $n\textsuperscript{th}$ ordered configuration space of $S$. It is a pathwise-connected $2n$-dimensional manifold, and if $S$ is different from the $2$-sphere $\mathbb{S}^{2}$ or the projective plane $\mathbb{R}P^{2}$, then by~\cite[Corollary~2.2]{FadNeu}, 
	$\pi_i(F_n(S))$ is trivial for all $i \geq 2$.
	The fundamental group $\pi_1(F_n(S))$, denoted by $P_n(S)$, is the $n$-string pure braid group of $S$. Let $\Sigma_n$ be the symmetric group on the set $\{1, \ldots, n \}$. We adopt the convention that a product of permutations is read from left to right. Note that the map $\tau_1 \colon\thinspace \Sigma_n \times F_n(S) \to F_n(S)$ defined by 
	$\tau_1(\sigma,(x_1,\ldots,x_n))=(x_{\sigma^{-1}(1)},\ldots,x_{\sigma^{-1}(n)})$ is a free action. 
	Using the notation and the constructions given in the Introduction, $F_n(S)_{\tau_1}$ is the orbit space corresponding to this action, and the group $\pi_1(F_n(S)_{\tau_1})$, which we will denote by $B_n(S)$, is the \emph{$n$-string (full) braid group of the surface $S$}. Let $\iota_1 =(p_{\tau_1})_\#\colon\thinspace P_n(S) = \pi_1 (F_n(S)) \to \pi_1(F_n(S)_{\tau_{1}}) = B_n(S)$  and let $\pi_1=\theta_{\tau_1}\colon\thinspace B_n(S) =  \pi_1(F_n(S)_{\tau_{1}}) \to \pi_1(F_n(S)_{\tau_{1}})/\im{(p_{\tau_{1}})_\#}\cong \Sigma_n$. Now let $G$ be a finite group of order $n$, and let $i \colon\thinspace G \to \Sigma_n$ be a monomorphism. Let $\tau_2\colon\thinspace G \times F_n(S) \to F_n(S)$ be the free involution defined by $\tau_2(g,x) = \tau_1(i(g),x)$. We may regard $G$ as a subgroup of $\Sigma_n$, and $\tau_2$ as the restriction of $\tau_1$ to $G \times F_n(S)$. Let the fundamental group of the corresponding orbit space $F_n(S)_{\tau_2}$ be denoted by $B_G(S)$, and let $\iota_2=(p_{\tau_2})_{\#}\colon\thinspace P_n(S) = \pi_1 (F_n(S)) \to \pi_1(F_n(S)_{\tau_{2}}) = B_G(S)$ and $\pi_2=\theta_{\tau_2}\colon\thinspace \pi_1(F_n(S)_{\tau_{2}}) \to \pi_1(F_n(S)_{\tau_{2}})/\im{(p_{\tau_{2}})_\#}\cong G$. For $j,k \in \{ 1,2 \}$, we may regard $i$ and $\iota_j$ as inclusions, $\pi_k$ as a natural projection, and $\tau_2$ as the restriction of $\tau_1$ to $G \times F_n(S)$. The map $h\colon\thinspace F_n(S)_{\tau_2} \to F_n(S)_{\tau_1}$ that sends a $\tau_2$-orbit to the corresponding $\tau_1$ orbit is well defined, satisfies $h\circ p_{\tau_1}=p_{\tau_2}$, and
induces an injective homomorphism $h_{\#}\colon\thinspace \pi_1(F_n(S)_{\tau_2}) \to \pi_1(F_n(S)_{\tau_1})$. Using the construction of~(\ref{seq:main}), we obtain the following commutative diagram of short exact sequences:
	\begin{equation}\label{eq:diag_braids}
		\begin{gathered}
			\xymatrix{
				1 \ar[r] & P_n (S) \ar[r]^-{\iota_2} \ar@{=}[d] & B_G(S) \ar[r]^-{\pi_2} \ar@{^{(}->}[d]_-{h_{\#}} & G \ar[r] \ar@{^{(}->}[d]^-{i} & 1\\
				1 \ar[r] & P_n (S) \ar[r]^-{\iota_1} & B_n (S) \ar[r]^-{\pi_1} & \Sigma_n \ar[r] & 1.
			}
		\end{gathered}
	\end{equation}
	Identifying $G$ with $i(G)$ and $\pi_1(F_n(S)_{\tau_2})=B_G(S)$ with $\im{h_{\#}}$, we see that $B_G(S)\subset \pi_1^{-1}(G)$, and it follows from~(\ref{eq:diag_braids}) that $B_G(S)= \pi_1^{-1}(G)$.

	\begin{remark}\label{remark:cayley}
		Given a finite group $G = \{ g_1, g_2, \ldots, g_n \}$ of order $n$, the set of bijections $\Sigma_G$ from $G$ to $G$ equipped with the operation of composition is a group that is isomorphic to $\Sigma_n$, and that we will identify in what follows. 
		 Note that composition in $\Sigma_G$ is read from right to left, but as we mentioned above, a product of permutations will be read from left to right. For all $g \in G$, the map $i_g\colon\thinspace G \to G$ given by $i_g(g') = g\ldotp g'$ for all $g'\in G$ is a bijection, and the map $i\colon\thinspace G \to \Sigma_n$ that assigns $g$ to $i_g$ is an embedding of $G$ in $\Sigma_n$. With respect to the homomorphism $i$, and with the notation of the previous paragraph, $F_n(S)$ may be described as $F_n(S)=\{(x_{g_1},\ldots,x_{g_n}) \in S^{n} \, \mid \, \text{$x_{g_j} \neq x_{g_k}$ if $g_j \neq g_k$} \}$, and $\tau_2(g,(x_{g_1},\ldots,x_{g_n})) = (x_{g\ldotp g_1},\ldots,x_{g\ldotp g_n})$.
	\end{remark}

	As for the Artin braid groups, the elements of $B_n(S)$ may be described geometrically as follows (other equivalent geometric constructions may be found in~\cite{Bir,Han,Mura} for example). Pick $n$ distinct points $P_1, \ldots, P_n$ in $S$. A geometric $n$-braid is a $n$-tuple $\beta=(b_1,\ldots,b_n)$ where $b_1,\ldots,b_n\colon\thinspace [0,1] \to S \times [0,1]$ are paths (called strings of $\beta$) satisfying the following conditions:
	\begin{enumerate}[(i)]
		\item $b_i(0)=P_i \times \{1\}$ for all $i=1,\ldots,n$.
		\item there exists a permutation $\widetilde{\pi}(\beta) \in \Sigma_n$, called the permutation of $\beta$, such that $b_i(1) =P_{\widetilde{\pi}(\beta)(i)} \times\{0\}$ for all $i=1,\ldots,n$.
		\item for all $t\in [0,1]$, the intersection $(S \times \{t \}) \cap \{b_1(t), \ldots, b_n(t) \}$ consists of $n$ distinct points.
	\end{enumerate}
	Two geometric $n$-braids $\beta_0$ and $\beta_1$ are said to be equivalent if there exists a continuous family of paths $b_1^s, \ldots, b_n^s: [0,1] \to S \times [0,1]$, $s \in [0,1]$, such that $h_0=\beta_0$, $h_1=\beta_1$, and $h_s=(b_1^s, \ldots, b_n^s)$ is an $n$-braid for all $s\in [0,1]$. This defines an equivalence relation on the set of geometric $n$-braids, and the equivalence class of a geometric $n$-braid is called simply an $n$-braid. Notationally, we do not distinguish a geometric $n$-braid from its equivalence class. If $\beta=(b_1, \ldots, b_n)$ and $\gamma=(c_1,\ldots,c_n)$ are geometric $n$-braids, then we define the product of $\beta$ and $\gamma$ by $\beta  \gamma=(b_1 \ast c_{\widetilde{\pi}(\beta)(1)},\ldots,b_n \ast c_{\widetilde{\pi}(\beta)(n)})$, where the symbol $\ast$ denotes concatenation of paths. This induces a group structure on the set of $n$-braids, and the resulting group is isomorphic to $B_n(S)$. Up to this isomorphism, we may identify $\pi_1$ and $\widetilde{\pi}$. The pure braid group $P_n(S)$ (resp.\ the group $B_G(S)$) consists of those $n$-braids whose permutation is the identity (resp.\ belongs to the subgroup $G$ of $\Sigma_n$).
	
	
	The following theorem is the main result of this section. It establishes a connection between the Borsuk-Ulam property and the braids groups of a surface, thus generalising~\cite[Theorem~7]{GonGuaLaa1}.

	\begin{theorem}\label{th:borsuk_braid}
		Let $(M, m_1)$ be a pointed, pathwise-connected $CW$-complex, and suppose that there exists a proper free cellular action $\tau\colon\thinspace G \times  M \to M$, where $G$ is a finite group of order $n \geq 2$ that we embed in $\Sigma_n$ as in Remark~\ref{remark:cayley}. Let $(S, s_1)$ be a pointed surface, where $S$ is either $\mathbb{R}^2$  or a compact surface without boundary different from $\mathbb{S}^2$ and $\mathbb{RP}^2$.
		
		\begin{enumerate}
			\item\label{it:borsuk_braid1} Let $\alpha \in [M,m_1;S,s_1]$ be a pointed homotopy class. Suppose that there exist homomorphisms $\varphi\colon\thinspace  \pi_1 (M, m_1) \to P_n (S)$ and $\psi\colon\thinspace \pi_1 (M_\tau, p_\tau(m_1)) \to B_G(S) $ for which the following diagram is commutative:
			\begin{equation}\label{eq:diag_borsuk_braid}\begin{gathered}\xymatrix{
						\pi_1 (M,m_1) \ar@{.>}[rr]^{\varphi} \ar[d]_{(p_\tau)_\#} \ar@/^0.9cm/[rrrr]^{\alpha_\#} 
						&& P_n(S) \ar[d]^-{\iota_2} \ar[rr]^{(p_1)_\#} && \pi_1(S,s_1) \\
						\pi_1(M_\tau,p_\tau(m_1)) \ar@{.>}[rr]^{\psi} \ar[rd]_{\theta_\tau} && B_G(S) \ar[ld]^{\pi_2} && \\
						& G,& & &
			}\end{gathered}\end{equation}
			where $p_1\colon\thinspace F_n(S) \to S$ denotes projection onto the first coordinate. Then $\alpha$ does not have the Borsuk-Ulam property with respect to $(G,\tau)$. Conversely, if $G$ is an Abelian group and $\alpha$ does not have the Borsuk-Ulam property with respect to $(G,\tau)$, then there exist homomorphisms $\varphi\colon\thinspace  \pi_1 (M, m_1) \to P_n (S)$ and $\psi\colon\thinspace \pi_1 (M_\tau, p_\tau(m_1)) \to B_G(S) $ for which the diagram~(\ref{eq:diag_borsuk_braid}) is commutative.

			\item\label{it:borsuk_pointed_free} Let $\beta\in [M, S]$ be a free homotopy class, and let $\alpha  \in [M, m_1; S, s_1]$ be a pointed homotopy class such that every representative map of $\alpha$ is also a representative map of $\beta$ when we forget the basepoints. If $\beta$ has the Borsuk-Ulam property with respect to $(G,\tau)$, then so does $\alpha$. Conversely, if $G$ is an Abelian group and $\alpha$ has the Borsuk-Ulam property with respect to $(G,\tau)$, then so does $\beta$.
		\end{enumerate}
	\end{theorem}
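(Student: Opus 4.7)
The governing principle is that a map $f\colon M\to S$ fails to collapse any $\tau$-orbit if and only if the coordinatewise assignment
\[
\widetilde{f}(x)=\bigl(f(\tau(g_1,x)),\ldots,f(\tau(g_n,x))\bigr)
\]
takes values in $F_n(S)$, in which case $\widetilde{f}$ is a candidate for a $(\tau,\tau_2)$-equivariant lift of $f$ through the first-coordinate projection $p_1$. The bridge between such equivariant lifts and the diagram~(\ref{eq:diag_borsuk_braid}) is supplied by Lemma~\ref{lem:equiv_maps}, whose applicability is secured by $\pi_i(F_n(S))=0$ for $i\geq 2$ from~\cite[Corollary~2.2]{FadNeu} under the standing assumptions on~$S$.

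For part~(\ref{it:borsuk_braid1}) forward, I would use Theorem~\ref{th:homotopy_algebra} to realise $\varphi$ as $\widetilde{\alpha}_\#$ for a pointed class $\widetilde{\alpha}\in[M,m_1;F_n(S),q]$, observe that~(\ref{eq:diag_borsuk_braid}) encodes exactly the hypothesis of Lemma~\ref{lem:equiv_maps}, and extract a $(\tau,\tau_2)$-equivariant representative $\widetilde{f}$ of $\widetilde{\alpha}$. Setting $f=p_1\circ\widetilde{f}$, the curved arrow gives $f_\#=(p_1)_\#\circ\varphi=\alpha_\#$, so $f$ represents $\alpha$; and writing $\widetilde{f}(x)=(y_{g_1},\ldots,y_{g_n})$ in the Cayley notation of Remark~\ref{remark:cayley}, equivariance yields $f(\tau(g,x))=y_{g\ldotp g_1}$, whose values at distinct elements of $G$ are the pairwise distinct entries of $\widetilde{f}(x)$, so no orbit is collapsed. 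For the converse under the hypothesis that $G$ is abelian, I reverse the construction: starting from a representative $f$ of $\alpha$ that collapses no orbit, the coordinatewise formula defines $\widetilde{f}\colon M\to F_n(S)$, and a direct computation using Remark~\ref{remark:cayley} shows that the equivariance condition $\widetilde{f}(\tau(g,x))=\tau_2(g,\widetilde{f}(x))$ reduces exactly to the identities $g_k\ldotp g=g\ldotp g_k$ for all $g,g_k\in G$: this is precisely where the abelian hypothesis is used, and where the argument fails for general~$G$. Lemma~\ref{lem:equiv_maps} then yields $\psi$, and $\varphi=\widetilde{f}_\#$ completes the diagram.

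Part~(\ref{it:borsuk_pointed_free}) forward is immediate, since any pointed representative of $\alpha$ is in particular a free representative of $\beta$. For the converse under the abelian hypothesis, assume that $\beta$ fails Borsuk-Ulam and choose a free representative $f$ that collapses no orbit; construct the equivariant $\widetilde{f}\colon M\to F_n(S)$ as in the previous paragraph, and extract from Lemma~\ref{lem:equiv_maps} homomorphisms $\varphi_0,\psi_0$ fitting an analogue of~(\ref{eq:diag_borsuk_braid}) whose curved arrow is $f_\#\colon\pi_1(M,m_1)\to\pi_1(S,f(m_1))$. Since $\alpha$ and the pointed class of $f$ both lie freely over $\beta$, their induced homomorphisms become conjugate in $\pi_1(S,s_1)$ after identification by a path from $s_1$ to $f(m_1)$; varying this path $\rho$ and precomposing with loops at $s_1$, I can arrange that the resulting identification sends $f_\#$ to $\alpha_\#$ exactly. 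Lifting $\rho$ to a path $\widetilde{\rho}$ in $F_n(S)$ via the Fadell--Neuwirth fibration $p_1$, and composing $\varphi_0,\psi_0$ with the basepoint-change isomorphisms induced by $\widetilde{\rho}$ and its image in $F_n(S)_{\tau_2}$, I obtain homomorphisms $\varphi,\psi$ whose curved arrow is $\alpha_\#$; the upper square and lower triangle of~(\ref{eq:diag_borsuk_braid}) are preserved by the naturality of $\iota_2$ and of $\theta_{\tau_2}=\pi_2$ under basepoint change. Part~(\ref{it:borsuk_braid1}) forward then forces $\alpha$ to fail Borsuk-Ulam, contradicting the hypothesis. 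I expect the main technical obstacle to be the basepoint bookkeeping in this final conjugation step, in particular checking that a single choice of $\widetilde{\rho}$ simultaneously transports all three commutativity conditions of~(\ref{eq:diag_borsuk_braid}).
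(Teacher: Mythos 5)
Your proposal is correct and follows essentially the same route as the paper: part~(\ref{it:borsuk_braid1}) is the same translation between the commutative diagram~(\ref{eq:diag_borsuk_braid}) and a $(\tau,\tau_2)$-equivariant map $M\to F_n(S)$ via Theorem~\ref{th:homotopy_algebra} and Lemma~\ref{lem:equiv_maps}, with the Abelian hypothesis entering in exactly the same computation $g_i\ldotp g=g\ldotp g_i$. The only cosmetic difference is in the converse of part~(\ref{it:borsuk_pointed_free}), where the paper first replaces $f$ by $H\circ f$ for a homeomorphism $H$ homotopic to the identity with $H(f(m_1))=s_1$ and then conjugates $\varphi',\psi'$ by a pure braid $b$ with $(p_1)_\#(b)=\gamma$, whereas you perform the equivalent basepoint-change along a path lifted through the Fadell--Neuwirth fibration; both versions hinge on the same fact that $(p_1)_\#\colon P_n(S)\to\pi_1(S,s_1)$ is surjective.
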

	
	\begin{remarks}\label{remark:main_th}\mbox{}
		%
		By applying~\cite[Chapter~V, Theorem~4.3 and Corollary~4.4]{White}, if $S$ is either $\mathbb{R}^2$ or a compact surface without boundary different from $\mathbb{S}^2$ and $\mathbb{RP}^2$ and $G$ is an Abelian group, Theorem~\ref{th:borsuk_braid} provides us with an algebraic criterion to decide whether a homotopy class has the Borsuk-Ulam property or not. The cases where $S= \mathbb{S}^2$ or $\mathbb{RP}^2$ is the subject of work in progress, and the techniques required are somewhat different. In the general case, for the time being we have not been able to remove the restriction that $G$ is Abelian in the converse of parts~(\ref{it:borsuk_braid1}) and~(\ref{it:borsuk_pointed_free}) in the statement of Theorem~\ref{th:borsuk_braid}. 
	\end{remarks}
	
	\begin{proof}[Proof of Theorem~\ref{th:borsuk_braid}]
		In what follows, for all $g \in G$ and $x \in M$, we will denote $\tau(g,x)$ simply by $gx$. With respect to Remark~\ref{remark:cayley}, by permuting the elements of $G$ if necessary, we may suppose that $g_1$ is its trivial element.
		
		\begin{enumerate}[(a)]
			\item 
			First suppose that there exist homomorphisms $\varphi\colon\thinspace  \pi_1 (M, m_1) \to P_n (S)$ and $\psi\colon\thinspace \pi_1 (M_\tau, p_\tau(m_1)) \to B_G(S) $ for which the diagram~(\ref{eq:diag_borsuk_braid}) is commutative. Since $F_n(S)$ is a $K(P_n(S),1)$-manifold, \cite[Chapter~V, Theorem~4.3]{White} and Lemma~\ref{lem:equiv_maps} imply that the homomorphism $\varphi$ is induced by a $(\tau, \tau_2)$-equivariant map $F\colon\thinspace  M \to F_n(S)$, so $F_{\#}=\varphi$. For $i=1,\ldots, n$, let $f_{g_i}\colon\thinspace  M \to S$ be a map such that:
			\begin{equation}\label{eq:F_compon}
				F (x)=(f_{g_1}( x),f_{g_2}(x),\ldots,f_{g_n}(x))
			\end{equation}
			for all $x \in M$. Since $F$ is $(\tau, \tau_2)$-equivariant, for all $i=1,\ldots,n$ and $x \in M$, by~(\ref{eq:F_compon}) and Reamrk~\ref{remark:cayley}, we have:
			\begin{equation}\label{eq:equiv_F}
				(f_{g_1}(g_i x),f_{g_2}(g_i x),\ldots,f_{g_n}(g_i x)) = F(g_ix)
				=\tau_2(g_i,F(x)) = (f_{g_i.g_1}( x),f_{g_i.g_2}(x),\ldots,f_{g_i.g_n}(x))
			\end{equation}
			Now let $f=f_{g_1}$. Since $g_1$ is the trivial element of $G$, we see from the first coordinate of~(\ref{eq:equiv_F}) that $f_{g_i}(x) = f(g_ix)$ for all $i=1,\ldots,n$ and $x \in M$. We conclude from~(\ref{eq:F_compon}) that
			\begin{equation*}
				F(x)=(f(x),f(g_2x),\ldots,f(g_nx)).
			\end{equation*}
			Since $F(x) \in F_n(S)$, then by definition, the pointed homotopy class $[f]$ does not have the Borsuk-Ulam property. Again, by~(\ref{eq:diag_borsuk_braid}) and \cite[Chapter~V, Theorem~4.3]{White}, we have $\alpha=[f]$, and thus $\alpha$ does not have the Borsuk-Ulam property as required.
			
Conversely, suppose  that $G$ is an Abelian group, and that $\alpha \in [M,m_1;S,s_1]$ does not have the Borsuk-Ulam property with respect to $(G,\tau)$. Then there exists a map 
			$f\colon\thinspace  (M, m_1) \to (S, s_1)$ such that $\alpha=[f]$, and for which the elements $f(g_1 x), \ldots, f(g_n x)$ are pairwise distinct for all $x \in M$. For $i=1,\ldots,n$, let $f_{g_i}\colon\thinspace M \to S$ be the map defined by $f_{g_i}(x) = f(g_ix)$. Then the map $F\colon\thinspace  M \to F_n(S)$ given by $F(x)=(f_{g_1} (x), \ldots, f_{g_n}(x))$ for all $x\in M$ is well defined, and it satisfies $p_1 \circ F=f$. We claim that $F$ is $(\tau,\tau_2)$-equivariant. Indeed, using the fact that $G$ is Abelian, for all $g \in G$ and $x \in M$, we have:
			%
			%
			\begin{align*}
				F(\tau(g,x))&=F(gx)=(f_{g_1}(gx),\ldots,f_{g_n}(gx))= (f(g_1(gx)),\ldots,f(g_n(gx)))\\
				&= (f((g \ldotp g_1)x),\ldots, f((g\ldotp g_n)x))
				= (f_{g\ldotp g_1}(x) ,\ldots, f_{g\ldotp g_n}(x)) \\
				& = \tau_2(g, (f_{g_1}(x),\ldots, f_{g_n}(x)))= \tau_2(g,F(x)).
			\end{align*}
			If $\varphi \colon\thinspace \pi_1 (M, m_1) \to P_n(S)$ is the homomorphism induced by $F$, then $(p_1)_\# \circ \varphi=(p_1)_\# \circ F_\#=(p_1 \circ F)_\#=(f_{g_1})_\#=f_\#=\alpha_\#$, and applying Lemma~\ref{lem:equiv_maps} to the pointed homotopy class $[F]$, there exists a homomorphism $\psi\colon\thinspace \pi_1 (M_\tau, p_{\tau}(m_1)) \to B_G(S)$ for which $\psi \circ (p_{\tau})_\#=\iota_2 \circ \varphi$ and $\pi_2 \circ \psi=\theta_\tau$, which yields the commutative diagram~(\ref{eq:diag_borsuk_braid}).
			
			\item 
Clearly, if $\beta$ has the Borsuk-Ulam property, then $\alpha$ does too because every representative map of $\alpha$ is also a representative map of $\beta$. Conversely, assume that $G$ is an Abelian group, and suppose that $\beta$ does not have the Borsuk-Ulam property. So there exists a map $f\colon\thinspace M \to S$ such that $\beta=[f]$ and for which the cardinality of the set $\{f(g_1 x), \ldots, f(g_n x) \}$ is equal to $n$ for all $x \in M$. By~\cite[Lemma~6.4]{Vick}, there exists a homeomorphism $H\colon\thinspace S \to S$  that is homotopic to the identity such that $H(f(m_1))=s_1$. Thus the map $H \circ f$ is a representative map of $\beta$, 
 the cardinality of the set $\{H \circ f(g_1 x), \ldots, H \circ f(g_n x) \}$ is equal to $n$ for all $x \in M$, and the pointed homotopy class $\alpha^\prime=[H \circ f] \in [M, m_1;S,s_1]$ does not have the Borsuk-Ulam property with respect to $(G,\tau)$  (as a pointed homotopy class). By part~(\ref{it:borsuk_braid1}), there exist homomorphisms $\varphi'\colon\thinspace \pi_1 (M, m_1) \to P_n (S)$ and $\psi'\colon\thinspace  \pi_1 (M_\tau, p_\tau(m_1)) \to B_G(S)$  such the following diagram is commutative:
			\begin{equation}\label{eq:diag_borsuk_braid_prime}\begin{gathered}\xymatrix{
						\pi_1 (M,m_1) \ar@{.>}[rr]^{\varphi^\prime} \ar[d]_{(p_\tau)_\#} \ar@/^0.9cm/[rrrr]^{\alpha^\prime_\#} 
						&& P_n(S) \ar[d]^-{\iota_2} \ar[rr]^{(p_1)_\#} && \pi_1(S,s_1) \\
						\pi_1(M_\tau,p_\tau(m_1)) \ar@{.>}[rr]^{\psi^\prime} \ar[rd]_{\theta_\tau} && B_G(S) \ar[ld]^{\pi_2} && \\
						& G .& & &
			}\end{gathered}\end{equation}
			%
			%
			%
			%
			By~\cite[Chapter~V, Theorem~4.3 and Corollary~4.4]{White}, there exists $\gamma \in \pi_1(S,s_1)$ such that the homomorphisms $\alpha_\#,\alpha^\prime_\# \colon\thinspace \pi_1(M,m_1) \to \pi_1(S,s_1)$ satisfying  $\alpha_\#(v)=\gamma \alpha^\prime_\#(v) \gamma^{-1}$ for all $v \in \pi_1(M,m_1)$. Further, by~\cite[Theorem~1.4]{Bir}, there exists $b \in P_n(S)$ such that $(p_1)_\#(b)=\gamma$. We define $\varphi\colon\thinspace  \pi_1 (M, m_1) \to P_n(S)$ (resp.\ $\psi\colon\thinspace  \pi_1 (M_\tau, p_\tau(m_1)) \to B_G(S) $) by $ \varphi(v)=b\varphi'(v)b^{-1}$ for all $v \in  \pi_1 (M, m_1)$ (resp.\ $\psi(w)=\iota_2(b)\psi^\prime (w)\iota_2 (b)^{-1}$ for all $w\in\pi_1 (M_\tau, p_\tau(m_1))$). Using the commutativity of~(\ref{eq:diag_borsuk_braid_prime}), it follows that the diagram~(\ref{eq:diag_borsuk_braid}) is also commutative, and therefore $\alpha$ does not have the Borsuk-Ulam property as a pointed homotopy class.\qedhere 
		\end{enumerate}
	\end{proof}
	
	The following corollary will be used in the proof of Propositions~\ref{prop:1}--\ref{prop:3},
	which is the restriction of Theorem~\ref{th:borsuk_braid} to the case where $S=\mathbb{R}^2$. 
	
	\begin{corollary}\label{cor:borsuk_braid_r2}
		Let $M$ be a compact surface without boundary, let $G$ be a finite Abelian group of order $n \geq 2$, and let $\tau\colon\thinspace G \times M \to M$ be a free action. Then $(M,G,\tau;\rtwo)$ does not have the Borsuk-Ulam property if and only if there exists a homomorphism $\psi \colon\thinspace \pi_1(M_\tau) \to B_{G}(\rtwo)$ such that the following diagram is commutative:
		\begin{equation}\label{diag:borsuk_r2}\begin{gathered}\xymatrix{
					\pi_1(M_\tau) \ar@{.>}[rr]^{\psi} \ar[rd]_{\theta_\tau} & & B_{G}(\rtwo) \ar[ld]^{\pi_2}  \\
					& G .& 
		}\end{gathered}\end{equation}
	\end{corollary}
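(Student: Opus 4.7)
The plan is to derive Corollary~\ref{cor:borsuk_braid_r2} directly from Theorem~\ref{th:borsuk_braid} by specialising $S=\mathbb{R}^2$. Since $\mathbb{R}^2$ is contractible, $\pi_1(\mathbb{R}^2,s_1)$ is trivial and there is a unique (free or pointed) homotopy class of maps $M\to \mathbb{R}^2$. Also, $\mathbb{R}^2$ is a surface different from $\mathbb{S}^2$ and $\mathbb{RP}^2$, so the hypotheses of Theorem~\ref{th:borsuk_braid} hold. First, I would invoke part~(\ref{it:borsuk_pointed_free}) of that theorem (using that $G$ is Abelian) to reduce the free Borsuk-Ulam problem for $(M,G,\tau;\mathbb{R}^2)$ to the pointed Borsuk-Ulam problem for the unique pointed class $\alpha\in[M,m_1;\mathbb{R}^2,s_1]$.

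Next, I would apply part~(\ref{it:borsuk_braid1}) of Theorem~\ref{th:borsuk_braid} to $\alpha$. Because $\pi_1(\mathbb{R}^2,s_1)$ is trivial, the induced homomorphism $\alpha_\#$ is the zero map and the outer triangle on the right of diagram~(\ref{eq:diag_borsuk_braid}) is automatically commutative for every choice of $\varphi$. Thus part~(\ref{it:borsuk_braid1}) tells us that $\alpha$ fails to have the Borsuk-Ulam property if and only if there exist homomorphisms $\varphi\colon\thinspace \pi_1(M)\to P_n(\mathbb{R}^2)$ and $\psi\colon\thinspace \pi_1(M_\tau)\to B_G(\mathbb{R}^2)$ such that $\iota_2\circ\varphi=\psi\circ(p_\tau)_\#$ and $\pi_2\circ\psi=\theta_\tau$.

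It remains to show that the existence of $\psi$ alone, subject to $\pi_2\circ\psi=\theta_\tau$, is equivalent to the existence of such a pair $(\varphi,\psi)$. The nontrivial direction goes as follows: if $\psi$ satisfies $\pi_2\circ\psi=\theta_\tau$, then for every $v\in\pi_1(M)$ we have
\[
\pi_2\bigl(\psi\circ(p_\tau)_\#(v)\bigr)=\theta_\tau\bigl((p_\tau)_\#(v)\bigr)=1
\]
by exactness of~(\ref{seq:main}), so $\psi\circ(p_\tau)_\#(v)\in\ker(\pi_2)=\im\iota_2$ by~(\ref{eq:diag_braids}). Since $\iota_2$ is injective, I may define $\varphi=\iota_2^{-1}\circ\psi\circ(p_\tau)_\#\colon\thinspace \pi_1(M)\to P_n(\mathbb{R}^2)$, which is a homomorphism satisfying $\iota_2\circ\varphi=\psi\circ(p_\tau)_\#$ by construction. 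The converse direction is immediate. Combining this with the preceding reductions yields the stated equivalence.

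The argument is essentially a bookkeeping exercise, and the main (minor) point to be careful about is the passage from free to pointed homotopy classes, which relies on the Abelian hypothesis on $G$ through part~(\ref{it:borsuk_pointed_free}) of Theorem~\ref{th:borsuk_braid}; apart from that, everything reduces to exactness and the injectivity of $\iota_2$.
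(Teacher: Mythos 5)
Your proposal is correct and follows essentially the same route as the paper: reduce to the pointed case via part~(\ref{it:borsuk_pointed_free}) of Theorem~\ref{th:borsuk_braid}, note that the triviality of $\pi_1(\mathbb{R}^2)$ makes the upper-right condition on $\varphi$ vacuous, and recover $\varphi$ from $\psi$ using the exactness of~(\ref{seq:main}), the identity $\ker(\pi_2)=\im{\iota_2}$ from~(\ref{eq:diag_braids}), and the injectivity of $\iota_2$. The paper states this last step more tersely, but your explicit construction $\varphi=\iota_2^{-1}\circ\psi\circ(p_\tau)_\#$ is exactly what it intends.
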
  
	
	\begin{proof} 
		First of all, since $\mathbb{R}^2$ is contractible then there is only one homotopy class $\alpha \in [M,  \mathbb{R}^2 ]$, and $\alpha_\# \colon\thinspace \pi_1(M) \to \pi_1(\mathbb{R}^2)$ is trivial. So by Theorem~\ref{th:borsuk_braid}, $(M,G,\tau;\rtwo)$ does not have the Borsuk-Ulam property if and only if there exist homomorphisms $\varphi\colon\thinspace  \pi_1 (M) \to P_n (\mathbb{R}^2)$ and $\psi\colon\thinspace \pi_1 (M_\tau) \to B_{G}(\mathbb{R}^2) $ for which the diagram~(\ref{eq:diag_borsuk_braid}) is commutative. In particular, if $(M,G,\tau;\rtwo)$ does not have the Borsuk-Ulam property then the lower triangle of~(\ref{eq:diag_borsuk_braid}) is the commutative diagram~(\ref{diag:borsuk_r2}). Conversely, given the commutative diagram~(\ref{diag:borsuk_r2}), the exactness of~(\ref{seq:main}) and the upper row of~(\ref{eq:diag_braids}) allow us to obtain~(\ref{eq:diag_borsuk_braid}).
	\end{proof}

	\section{The group $B_{\zn}(\rtwo)$}\label{sec:braid_group}

	Let $n\geq 2$. We denote the cyclic group of order $n$ by $\z_n = \{ \overline{0}, \overline{1},\ldots, \overline{n-1} \}$. With respect to Remark~\ref{remark:cayley} and the paragraph that precedes it, we fix an embedding $i\colon\thinspace \z_n \to \Sigma_n$ of $\zn$ in $\Sigma_n$, given by $i(\overline{1}) = \left( 1,n,\ldots,2 \right)$, and we identify $B_{\zn}(S)$ with its image in $B_n(S)$ under $h_{\#}$.
	 The aim of this section is to give a presentation of the group $B_{\z_n}(\rtwo)$. Recall that if $n\in \mathbb{N}$, the Artin braid group $B_n$ on $n$ strings is generated by the \emph{Artin generators} $\sigma_1,\ldots, \sigma_{n-1}$, subject to the following \emph{Artin relations}:
	\begin{align}
		\sigma_{i} \sigma_{j}&= \text{$\sigma_{j} \sigma_{i}$ if $\lvert i-j \rvert \geq 2$, where $1\leq i,j\leq n-1$}\label{eq:rel1}\\
		\sigma_{i} \sigma_{i+1} \sigma_{i}&= \text{$\sigma_{i+1} \sigma_{i} \sigma_{i+1}$ for $i=1,\ldots,n-2$.}\label{eq:rel2}
	\end{align}
	%
	%
	Geometrically, $\sigma_i$ and its inverse correspond to the braids illustrated in Figure~\ref{fig:sigma_i}.
	%
	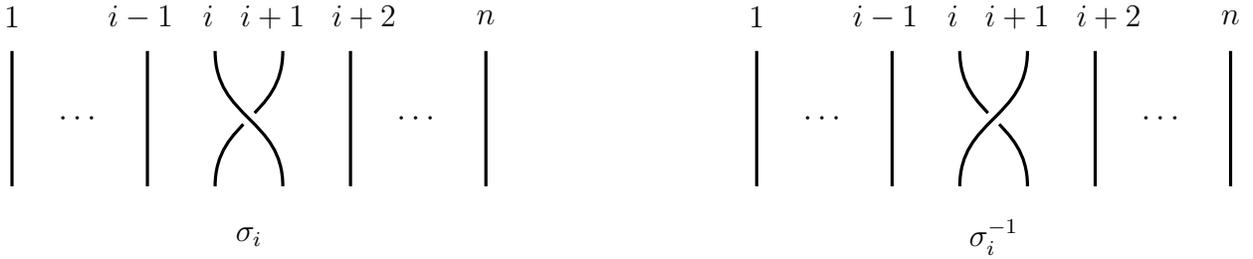
\begin{figure}[h]
		\hfill
		\begin{tikzpicture}[scale=0.9, very thick]
			
			\foreach \k in {5}
			{\draw (\k,3) .. controls (\k,2) and (\k-1,2) .. (\k-1,1);}
			
			\foreach \k in {4}
			{\draw[white,line width=6pt] (\k,3) .. controls (\k,2) and (\k+1,2) .. (\k+1,1);
				\draw (\k,3) .. controls (\k,2) and (\k+1,2) .. (\k+1,1);}
			
			\foreach \k in {15}
			{\draw (\k,3) .. controls (\k,2) and (\k+1,2) .. (\k+1,1);}
			
			\foreach \k in {16}
			{\draw[white,line width=6pt] (\k,3) .. controls (\k,2) and (\k-1,2) .. (\k-1,1);
				\draw (\k,3) .. controls (\k,2) and (\k-1,2) .. (\k-1,1);}

			\foreach \k in {1,3,6,8,12,14,17,19}
			{\draw (\k,1)--(\k,3);}
			
			\foreach \k in {2,7,13,18}
			{\node at (\k,2) {$\cdots$};}
			
			\foreach \k in {1,12}
			{\node at (\k,3.5) {$1$};
				\node at (\k+1.9,3.5) {$i-1$};
				\node at (\k+2.9,3.52) {$i$};
				\node at (\k+3.85,3.5) {$i+1$};
				\node at (\k+5.2,3.5) {$i+2$};
				\node at (\k+7,3.5) {$n$};}
			
			\node at (4.5,0.25) {$\sigma_{i}$};
			\node at (15.5,0.25) {$\sigma_{i}^{-1}$};
			
		\end{tikzpicture}
		\hspace*{\fill}
		\caption{The braid $\sigma_{i}$ and its inverse.}\label{fig:sigma_i}
	\end{figure}
	
	
	\begin{theorem}\label{th:bzn_rtwo}
		The group $B_{\zn}(\rtwo)$ admits the following presentation:
		\begin{enumerate}
			\item[\textbf{generating set:}] $\{g\} \cup \{A_{i,j}\}_{1\leq i<j\leq n}$.
			\item[\textbf{relations:}]\mbox{}
			\begin{enumerate}[(I)]
				\item\label{it:relA} $A_{r,s}^{-1} A_{i,j}A_{r,s}= \begin{cases}
					A_{i,j} & \text{if $r<s<i<j$ or $i<r<s<j$}\\
					A_{r,j} A_{i,j}A_{r,j}^{-1} & \text{if $r<i=s<j$}\\
					A_{r,j} A_{s,j} A_{i,j}A_{s,j}^{-1} A_{r,j}^{-1} & \text{if $i=r<s<j$}\\
					A_{r,j} A_{s,j}A_{r,j}^{-1} A_{s,j}^{-1} A_{i,j}A_{s,j} A_{r,j} A_{s,j}^{-1} A_{r,j}^{-1} & \text{if $r<i<s<j$.}
				\end{cases}$
				
				\item\label{rel:II} $gA_{i,j}g^{-1}= \begin{cases}
					A_{i+1,j+1} & \text{if $1\leq i< j\leq n-1$}\\
					A_{1,n} \cdots A_{n-1,n} A_{1,i+1} A_{n-1,n}^{-1}\cdots A_{1,n}^{-1} & \text{if $1\leq i\leq n-1$ and $j=n$.}
				\end{cases}$
				
				\item\label{rel:IV} $g^n=\displaystyle \prod_{j=2}^{n}\left( \prod_{i=1}^{j-1} A_{i,j} \right)$.
			\end{enumerate}
		\end{enumerate}
	\end{theorem}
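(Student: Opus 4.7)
The plan is to apply the standard procedure for presenting a group extension to the short exact sequence
\begin{equation*}
1 \longrightarrow P_{n}(\rtwo) \longrightarrow B_{\zn}(\rtwo) \stackrel{\pi_{2}}{\longrightarrow} \zn \longrightarrow 1
\end{equation*}
coming from the top row of~(\ref{eq:diag_braids}). The pure braid group $P_{n}(\rtwo)$ is classically presented by the generators $\{A_{i,j}\}_{1\leq i<j\leq n}$ subject to the relations~(\ref{it:relA}); see, for instance, \cite{Bir, Han}. The quotient $\zn=\langle \overline{1}\mid \overline{1}^{\,n}\rangle$ is cyclic of order $n$. For a fixed lift $g\in B_{\zn}(\rtwo)$ of $\overline{1}$, it suffices to express each conjugate $gA_{i,j}g^{-1}$ and the element $g^{n}$ as explicit words in the $A_{r,s}$; together with the pure braid relations, this will give the desired presentation.

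The first step is to fix a convenient geometric lift of $\overline{1}$. Identifying $B_{\zn}(\rtwo)$ with a subgroup of $B_{n}(\rtwo)$ via $h_{\#}$, and viewing the $n$ basepoints as collinear, we take $g$ to be a braid realising a single cyclic shift of these basepoints, so that its underlying permutation is exactly $i(\overline{1})=(1,n,\ldots,2)$; concretely, $g$ can be written as a suitable positive word in the Artin generators $\sigma_{1},\ldots,\sigma_{n-1}$.

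With this $g$ in hand, I would compute $gA_{i,j}g^{-1}$ geometrically: conjugation by $g$ corresponds to the cyclic relabelling of strings induced by $i(\overline{1})$ (strings $1,\ldots, n-1$ become $2,\ldots, n$, and string $n$ becomes string $1$). For $j\leq n-1$ this yields directly $A_{i+1,j+1}$, i.e.\ the first line of~(\ref{rel:II}). The delicate case is $j=n$, where the shifted index $(i+1,1)$ is not in the standard form $r<s$; untangling the pure braid $gA_{i,n}g^{-1}$ and re-expressing it in terms of the standard generators using the pure braid relations~(\ref{it:relA}) produces the word of the second line. Next, $g^{n}$ performs $n$ consecutive cyclic shifts, returning each basepoint to its original position while effecting one full twist of the $n$ strings around one another; this is the classical full twist $\Delta_{n}^{2}=\prod_{j=2}^{n}\bigl(\prod_{i=1}^{j-1}A_{i,j}\bigr)$, giving~(\ref{rel:IV}).

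Assembling the pieces via the standard extension-presentation theorem then yields exactly the presentation of $B_{\zn}(\rtwo)$ stated in the theorem. The main technical obstacle is the wraparound case $j=n$ in~(\ref{rel:II}), where deriving the specific word $A_{1,n}\cdots A_{n-1,n}\,A_{1,i+1}\,A_{n-1,n}^{-1}\cdots A_{1,n}^{-1}$ from the geometric braid $gA_{i,n}g^{-1}$ requires careful combinatorial manipulation with the pure braid relations; the remaining verifications are either routine pure braid theory or a straightforward geometric identification.
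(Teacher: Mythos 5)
Your proposal follows essentially the same route as the paper: both apply the standard presentation-of-an-extension method (the paper cites Johnson) to the short exact sequence $1 \to P_n(\rtwo) \to B_{\zn}(\rtwo) \to \zn \to 1$, using the $A_{i,j}$-presentation of $P_n(\rtwo)$ for relations~(I), the cyclic-shift lift $g=\sigma_1\cdots\sigma_{n-1}$ for relations~(II), and $g^n=\Delta_n^2$ for relation~(III). The one step you defer, the wraparound case $j=n$ of relation~(II), is carried out in the paper by a short algebraic computation using $g\sigma_k g^{-1}=\sigma_{k+1}$, the identity $\sigma_1\cdots\sigma_{n-1}\sigma_{n-1}\cdots\sigma_1=A_{1,n}\cdots A_{n-1,n}$, and the alternative expression for $A_{i,j}$ in terms of the Artin generators, rather than by geometric untangling.
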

	
	\begin{remarks}\label{rems:gensbzn}\mbox{}
		\begin{enumerate}
			\item\label{it:gensbzna} In terms of the Artin generators of $B_n$, we take $g=\sigma_1 \cdots \sigma_{n-1}$, and:
			\begin{equation}\label{eq:defaij}
				\text{$A_{i,j}=\sigma_{j-1}\cdots \sigma_{i+1} \sigma_{i}^2 \sigma_{i+1}^{-1} \cdots \sigma_{j-1}^{-1}$ for $1\leq i<j\leq n$.}
			\end{equation}
			One may check using~(\ref{eq:rel1}) and~(\ref{eq:rel2}) that:
			\begin{equation}\label{eq:altdefaij}
				\text{$A_{i,j}=\sigma_{i}^{-1} \cdots \sigma_{j-2}^{-1} \sigma_{j-1}^2 \sigma_{j-2} \cdots \sigma_{i}$ for $1\leq i<j\leq n$.}
			\end{equation}
			Geometrically, $A_{i,j}$ corresponds to the braid illustrated in Figure~\ref{fig:Aij}. 
			
			\item\label{it:gensbznb} The element $g^n$ that appears in relation~(\ref{rel:IV}) is the well-known full-twist braid, denoted by $\Delta_n^2$, that generates the centre of $B_n(\rtwo)$ if $n\geq 3$. 
			
			\item\label{it:gensbznc} With respect to the lower row of~(\ref{eq:diag_braids}), we have $\pi_2(g)=(1,n,\ldots,2)$. Taking $S=\rtwo$ and $G=\zn$, where we identify $\zn$ with the cyclic subgroup $\langle (1,n,\ldots,2)\rangle$ of $\Sigma_n$ in the manner described at the beginning of this section, it follows that $g\in B_{\zn}(\rtwo)$. 
			
			\item\label{it:gensbznd} Considering the permutation homomorphism $\pi_2 \colon\thinspace B_{\zn}(\rtwo) \to \zn$ of the upper row of~(\ref{eq:diag_braids}), and up to the identification of $B_{\zn}(\rtwo)$ (resp.\ $\zn$) with $\im{h_{\#}}$ (resp.\ with $\im{i}$), we have $\pi_2(g)=\overline{1}$ and $\pi_2(A_{i,j})=\overline{0}$ for all $1 \leq i < j \leq n$. Further, for all $a \in B_{\zn}(\rtwo)$, there exist unique $0\leq m\leq n-1$ and $w\in P_n(\rtwo)$ for which $a=wg^m$.
		\end{enumerate}
	\end{remarks}

	%
	%
	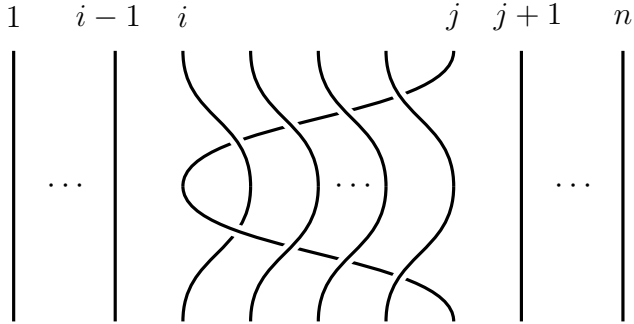
\begin{figure}[h]
		\hfill
		\begin{tikzpicture}[scale=0.9, very thick]
			\draw (7,6.5) .. controls (7,5.5) and (3,5.5) .. (3,4.5);
			\foreach \j in {3,...,6} 
			{\draw[white,line width=5pt] (\j,6.5).. controls (\j,5.5) and (\j+1,5.5) .. (\j+1,4.5);
				\draw (\j,6.5).. controls (\j,5.5) and (\j+1,5.5) .. (\j+1,4.5);
				\draw[white,line width=5pt] (3,4.5) .. controls (3,3.5) and (7,3.5) .. (7,2.5);}
			\draw (4,4.5) .. controls (4,3.5) and (3,3.5) .. (3,2.5);
			\draw[white,line width=5pt] (3,4.5) .. controls (3,3.5) and (7,3.5) .. (7,2.5);
			\draw (3,4.5) .. controls (3,3.5) and (7,3.5) .. (7,2.5);
			\foreach \j in {4,...,6} 
			{\draw[white,line width=5pt] (\j+1,4.5).. controls (\j+1,3.5) and (\j,3.5) .. (\j,2.5);
				\draw (\j+1,4.5).. controls (\j+1,3.5) and (\j,3.5) .. (\j,2.5);
			};
			\foreach \j in {0.5,2,8,9.5}
			{\draw (\j,6.5)-- (\j,2.5);}
			\foreach \k in {0.75,5,8.25}
			{\node at (\k+0.55,4.5) {$\cdots$};}
			\node at (0.5,7) {$1$};
			\node at (1.9,7) {$i-1$};
			\node at (3,7) {$i$};
			\node at (7,7) {$j$};
			\node at (8.1,7) {$j+1$};
			\node at (9.5,7) {$n$};
		\end{tikzpicture}
		\hspace*{\fill}
		\caption{The element $A_{i,j}$ of $B_{n}$.}\label{fig:Aij}
	\end{figure}
	%
	
	
	

	\begin{proof}[Proof of Theorem~\ref{th:bzn_rtwo}]
		We apply the method of~\cite[Chapter~10, Proposition~1]{Johnson} to the short exact sequence of the upper row of~(\ref{eq:diag_braids}), where we take $S= \rtwo$, $G=\zn$, and $g$ and $A_{i,j}$, where $1\leq i<j\leq n$, as in Remarks~\ref{rems:gensbzn}(\ref{it:gensbzna}). We identify the cyclic subgroup $\langle (1,n,\ldots,2)\rangle$ of $\Sigma_n$ with $\zn$ as in Remarks~\ref{rems:gensbzn}(\ref{it:gensbznc}). With respect to the upper row of~(\ref{eq:diag_braids}), it follows that $g$ is a coset representative of the generator $\overline{1}$ of $\zn$. Since $(A_{i,j})_{1\leq i<j\leq n}$ is a generating set of $P_n(\rtwo)$ by~\cite[Chapter~1, Lemma~4.2,]{Han}, it follows from~\cite[Chapter~10, Proposition~1]{Johnson} that $\{g\} \cup \{A_{i,j}\}_{1\leq i<j\leq n}$ is a generating set for $B_{\zn}(\rtwo)$. From the same proposition, the following three types of relation form a complete set of relations of $B_{\zn}(\rtwo)$:
		\begin{enumerate}[(i)]
			\item the relations emanating from $P_n(\rtwo)$. By~\cite[Chapter~1, Lemma~4.2]{Han}, these are the relations~(\ref{it:relA}) of the statement.
			
			\item the conjugates of the generators of $P_n(\rtwo)$ by $g$, rewritten in terms of the generators of $P_n(\rtwo)$. Suppose first that $1\leq i<j\leq n-1$. Using~(\ref{eq:rel1}) and~(\ref{eq:rel2}), we see that that $g\sigma_k g^{-1}=\sigma_{k+1}$ for all $k=1,\ldots, n-2$, and it follows from~(\ref{eq:defaij}) that $gA_{i,j}g^{-1}= A_{i+1,j+1}$, which is the first relation of~(\ref{rel:II}). Assume now that $1\leq i\leq n-1$ and $j=n$. One may check (by induction on $n$ for example) that $\sigma_{1}\cdots \sigma_{n-1} \sigma_{n-1}\cdots \sigma_1=A_{1,n} \cdots A_{n-1,n}$, and it follows from this and~(\ref{eq:altdefaij}) that:
			\begin{align*}
				gA_{i,n}g^{-1}&= \sigma_{1}\cdots \sigma_{n-1} \ldotp \sigma_{n-1}\cdots \sigma_{i+1} \sigma_{i}^2 \sigma_{i+1}^{-1} \cdots \sigma_{n-1}^{-1} \ldotp \sigma_{1}^{-1} \cdots \sigma_{n-1}^{-1}\\
				&= \sigma_{1}\cdots \sigma_{n-1} \sigma_{n-1}\cdots \sigma_1 \ldotp \sigma_1^{-1} \cdots \sigma_i^{-1} \sigma_{i}^2 \sigma_{i} \cdots \sigma_1 \ldotp \sigma_1^{-1} \cdots \sigma_{n-1}^{-1} \sigma_{1}^{-1} \cdots \sigma_{n-1}^{-1}\\
				&= A_{1,n} \cdots A_{n-1,n} \sigma_1^{-1} \cdots \sigma_{i-1}^{-1} \sigma_{i}^2 \sigma_{i-1} \cdots \sigma_1 A_{n-1,n}^{-1} \cdots A_{1,n}^{-1}\\
				&= A_{1,n} \cdots A_{n-1,n} A_{1,i+1} A_{n-1,n}^{-1} \cdots A_{1,n}^{-1},
			\end{align*}
			which is the second relation of~(\ref{rel:II}).
			
			\item the lift of the relation $n\ldotp \overline{1}=\overline{0}$ of $\zn$ in terms of generators of $P_n(\rtwo)$. Since $g^n=\Delta_n^2$ by Remarks~\ref{rems:gensbzn}(\ref{it:gensbznb}), relation~(\ref{rel:IV}) of the statement follows by noting that $\Delta_n^2=\prod_{j=2}^{n}\left( \prod_{i=1}^{j-1} A_{i,j} \right)$.\qedhere
		\end{enumerate}
	\end{proof}
	
	
	%
	
	
	\section{Proof of Theorem~\ref{th:main}}\label{sec:proof}
	
	Let $M$ be a compact surface without boundary, and let $\zn=\{\overline{0}, \overline{1}, \overline{2}, \ldots, \overline{n-1} \}$ denote the cyclic group of order $n \geq 2$. If $\tau \colon\thinspace \zn \times M \to M$ is a free action, to simplify the notation, we will consider $\tau$ to be the continuous map $\tau\colon\thinspace M \to M$ defined by $\tau =\tau(\overline{1},\cdot)$, that satisfies $\tau^n={\rm Id}$ and for which the cardinality of the set $\{x, \tau(x), \tau^2(x), \ldots, \tau^{n-1}(x) \}$ is equal to $n$ for all $x \in M$. Then the orbit space $M_\tau$ is also a compact surface without boundary. In what follows, we will distinguish the following three possibilities, and in each case, we define an element $\widehat{\delta}$ of $\pi_1 (M_\tau)$:
	\begin{enumerate}[(I)]
		\item\label{it:orient} $M_\tau$ is orientable, $\pi_1 (M_\tau)=\left\langle a_1,\ldots,a_{2m} \, \mid \, [a_1,a_2]\cdots [a_{2m-1},a_{2m}] \right\rangle$, and $\widehat{\delta}=1$. 
		
		\item\label{it:nonorientodd} $M_\tau$ is non-orientable of odd genus, $\pi_1 (M_\tau)=\left\langle c,a_1,a_2,\ldots,a_{2m-1},a_{2m} \, \mid \, c^2[a_1,a_2]\cdots [a_{2m-1},a_{2m}]\right\rangle$, and $\widehat{\delta}=c$.

		\item\label{it:nonorienteven} $M_\tau$ is non-orientable of even genus, $\pi_1 (M_\tau)= \left\langle u,v,a_1,\ldots,a_{2m} \, \mid \, uvuv^{-1}[a_1,a_2]\cdots [a_{2m-1},a_{2m}]\right\rangle$, and $\widehat{\delta}=u$.
	\end{enumerate}
	
	%
	
	\begin{remark}\label{rem:theta_tau}
		Let $G= \zn$, and consider the homomorphism $\theta_\tau\colon\thinspace \pi_1 (M_\tau) \to \zn$ defined by~(\ref{seq:main}). If $M$ is a compact surface without boundary, then $\theta_{\tau}(\widehat{\delta})=\overline{0}$ if $M_\tau$ is orientable or if $n$ is odd. In other words, if $\theta_\tau(\widehat{\delta})\neq \overline{0}$, then $M_\tau$ is non-orientable, $n$ is even, and $\theta_\tau (\widehat{\delta})=\overline{n/2}$. 
%
%
Since the target of the
homomorphism  $\theta_\tau\colon\thinspace \pi_1 (M_\tau) \to \zn$ is
Abelian, the homomorphism $\theta_\tau$ factors through the
Abelianisation $\pi_1(M_\tau)_{Ab}$ via the homomorphism that we denote $(\theta_\tau)_{Ab}\colon\thinspace \pi_1 (M_\tau)_{Ab} \to \z_n$, and for which $\theta_\tau(\widehat{\delta})=(\theta_\tau)_{Ab}(\delta)$.
	\end{remark}
	
	In order to establish Theorem~\ref{th:main}, by Remark~\ref{rem:theta_tau}, it suffices to prove the following three propositions.
	
	
	\begin{proposition}\label{prop:1} 
		Let $M$ be a compact surface without boundary that admits a free action $\tau$ of $\zn$. With the notation of~(\ref{it:orient})--(\ref{it:nonorienteven}) above, suppose that $\theta_\tau(\widehat{\delta})= \overline{0}$. Then $(M,\zn,\tau;\rtwo)$ does not have the Borsuk-Ulam property.
	\end{proposition}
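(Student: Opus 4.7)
The plan is to apply Corollary~\ref{cor:borsuk_braid_r2}: it suffices to produce a homomorphism $\psi\colon\thinspace \pi_1(M_\tau) \to B_{\z_n}(\rtwo)$ making the triangle $\pi_2 \circ \psi = \theta_\tau$ commute. The idea is that the element $g\in B_{\z_n}(\rtwo)$ of Theorem~\ref{th:bzn_rtwo} already maps to the generator $\overline{1}$ of $\z_n$ under $\pi_2$ (Remarks~\ref{rems:gensbzn}(\ref{it:gensbznc})--(\ref{it:gensbznd})), so any power $g^k$ will satisfy $\pi_2(g^k)=\overline{k}$. Moreover, all powers of $g$ commute with one another, and therefore any word built from powers of $g$ whose total $\pi_2$-image is $\overline{0}$ and that only involves commutators (or similar vanishing patterns) will be trivial in $B_{\z_n}(\rtwo)$.

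Concretely, I would define $\psi$ on generators by $\psi(x) = g^{k_x}$, where $x$ runs over the given generating set of $\pi_1(M_\tau)$ in each of the three cases~(\ref{it:orient})--(\ref{it:nonorienteven}), and $k_x\in \{0,1,\ldots,n-1\}$ is the representative of $\theta_\tau(x)\in \z_n$. The verification that $\psi$ respects the defining relation then splits into cases. In case~(\ref{it:orient}) the relation $[a_1,a_2]\cdots [a_{2m-1},a_{2m}]=1$ is sent to a product of commutators of powers of $g$, all of which vanish since $g$ commutes with itself. In case~(\ref{it:nonorientodd}), the hypothesis $\theta_\tau(\widehat{\delta})=\overline{0}$ forces $k_c=0$, so $\psi(c)=1$, and the relation $c^2[a_1,a_2]\cdots [a_{2m-1},a_{2m}]=1$ is sent to $1$. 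In case~(\ref{it:nonorienteven}) the same hypothesis forces $\psi(u)=1$, so
\[
\psi(uvuv^{-1}) = 1\cdot g^{k_v} \cdot 1 \cdot g^{-k_v} = 1,
\]
and again the commutators vanish. Hence $\psi$ is well defined.

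Finally, $\pi_2\circ \psi = \theta_\tau$ holds on each generator by construction, since $\pi_2(g^{k_x})=\overline{k_x}=\theta_\tau(x)$, so the diagram~(\ref{diag:borsuk_r2}) commutes, and Corollary~\ref{cor:borsuk_braid_r2} yields that $(M,\z_n,\tau;\rtwo)$ does not have the Borsuk-Ulam property.

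There is no real obstacle to this argument: the entire technical payload has been absorbed into Theorem~\ref{th:borsuk_braid} and the presentation of $B_{\z_n}(\rtwo)$ in Theorem~\ref{th:bzn_rtwo}. The only point requiring any care is remembering to send the special generator ($c$ or $u$) to $1$ rather than to a non-trivial power of $g$ in cases~(\ref{it:nonorientodd}) and~(\ref{it:nonorienteven}); this is exactly where the hypothesis $\theta_\tau(\widehat{\delta})=\overline{0}$ is used, and why the proposition fails without it (in which case one would need a genuinely different lift, as will be handled by the subsequent propositions).
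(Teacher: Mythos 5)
Your proposal is correct and is essentially the paper's own proof: the authors likewise define $\psi$ by sending each generator $x$ to $g^{k_x}$ with $\pi_2(g^{k_x})=\theta_\tau(x)$ (so $\psi(\widehat{\delta})=1$ by the hypothesis), check the single surface relation using that powers of $g$ commute, and conclude via Corollary~\ref{cor:borsuk_braid_r2}.
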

	
	\begin{proposition}\label{prop:2}
		Let $M$ be a compact surface without boundary that admits a free action $\tau$ of $\zn$. With the notation of~(\ref{it:orient})--(\ref{it:nonorienteven}) above, suppose that $M_\tau$ is non-orientable, $n=4k$ for some $k \in \mathbb{N}$, and $\theta_{\tau}(\widehat{\delta})= \overline{2k}$. Then $(M,\zn,\tau;\rtwo)$ does not have the Borsuk-Ulam property.
	\end{proposition}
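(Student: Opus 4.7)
The plan is to apply Corollary~\ref{cor:borsuk_braid_r2} and exhibit a homomorphism $\psi\colon \pi_1(M_\tau) \to B_{\zn}(\rtwo)$ satisfying $\pi_2\circ\psi = \theta_\tau$. Since $M_\tau$ is non-orientable, we are in case~(\ref{it:nonorientodd}) or~(\ref{it:nonorienteven}) of the trichotomy preceding the proposition. A preliminary Euler characteristic argument ($\chi(M)=n\chi(M_\tau) \leq 2$) rules out the degenerate subcase $m=0$ of~(\ref{it:nonorientodd}) (which would force $\chi(M) = 4k > 2$), so in that case at least one commutator pair $[a_{2i-1},a_{2i}]$ appears in the defining relator.

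My ansatz is $\psi(\widehat{\delta}) = g^{2k} w$ for a pure braid $w\in P_n(\rtwo)$ to be determined, and $\psi(a_i) = g^{k_i} x_i$ with $\overline{k_i} = \theta_\tau(a_i)$ and $x_i \in P_n(\rtwo)$. Invoking $g^n = \Delta_n^2$ from relation~(\ref{rel:IV}) of Theorem~\ref{th:bzn_rtwo} together with the centrality of $\Delta_n^2$ in $B_n$, each occurrence of $g^{2k}$ in the defining relation can be moved past the pure-braid pieces at the cost of conjugations described by relation~(\ref{rel:II}). The defining relator of $\pi_1(M_\tau)$ thereby transforms into an explicit equation in $P_n(\rtwo)$.

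As a necessary consistency check, I examine the abelianisation: from Theorem~\ref{th:bzn_rtwo} one obtains $B_{\zn}(\rtwo)^{\mathrm{ab}} \cong \z^{2k}$, the relation $g^n = \prod A_{i,j}$ becoming $4k\bar g = 4k\bar A_1 + \cdots + 4k\bar A_{2k-1} + 2k\bar A_{2k}$ after identifying all $A_{i,j}$ lying in the same $g$-conjugation orbit. The abelianised defining relation then forces $\bar w = -2k\bar g$ in this group, which is realised for instance by $w = A_{1,2}^{-2k}A_{1,3}^{-2k}\cdots A_{1,2k}^{-2k}A_{1,2k+1}^{-k}$, picking one representative $A_{1,d+1}$ from each of the $2k$ orbits.

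The main obstacle is to upgrade this abelianisation-level solution to a genuine identity in $B_{\zn}(\rtwo)$: the residual element $\psi(\widehat{\delta})^2$ in case~(\ref{it:nonorientodd}) (respectively $\psi(u)\psi(v)\psi(u)\psi(v)^{-1}$ in case~(\ref{it:nonorienteven})) must equal $\prod_{i=1}^m [\psi(a_{2i-1}),\psi(a_{2i})]^{-1}$, interpreted as the empty product when $m=0$ (which can occur only in case~(\ref{it:nonorienteven}), the Klein bottle subcase). The remaining freedom in choosing the $k_i$'s, the $x_i$'s, and, in case~(\ref{it:nonorienteven}), the pure-braid part of $\psi(v)$ and its $\pi_2$-value, is what makes this possible: expanding each commutator via relation~(\ref{rel:II}) and exploiting the centrality of $\Delta_n^2$ permits the residual pure braid to be balanced factor-by-factor, thereby completing the construction of $\psi$ and showing that the Borsuk-Ulam property fails.
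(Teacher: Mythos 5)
Your overall strategy (apply Corollary~\ref{cor:borsuk_braid_r2} and build $\psi$ generator by generator, writing each image as $g^{k}\cdot(\text{pure braid})$) is the same as the paper's, and your abelianisation computation is a sensible necessary check. But the proposal has a genuine gap exactly at what you yourself call ``the main obstacle'': upgrading the abelianisation-level solution to an actual identity in $B_{\zn}(\rtwo)$ is asserted, not proved. The claim that ``the remaining freedom \dots permits the residual pure braid to be balanced factor-by-factor'' is not substantiated, and it cannot be dismissed as routine: the whole content of the dichotomy between $n\equiv 0\bmod 4$ and $n\equiv 2\bmod 4$ lives precisely here. Indeed, in the Klein-bottle subcase ($m=0$ of case~(\ref{it:nonorienteven}), which you correctly note can occur), there are no commutators and hence no ``remaining freedom'' at all; the required identity is exactly the statement that there exist $\alpha,\beta\in B_{\z_{4k}}(\rtwo)$ with $\pi_2(\alpha)=\overline{2k}$, $\pi_2(\beta)=\overline{1}$ and $\alpha\beta\alpha\beta^{-1}=1$. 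This is a nontrivial equation in a nonabelian group, and consistency on the abelianisation is far from sufficient (compare Proposition~\ref{prop:3}, where an abelian invariant does obstruct the analogous equation for $n=4k+2$ -- so the abelian level is exactly where the two cases part ways, and a positive solution must be produced by other means).

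The paper fills this gap geometrically: Lemma~\ref{lem:geometry} constructs an explicit free $\z_{4k}$-action on $\torus$ with orbit space $\klein$ together with an explicit map $\torus\to\rtwo$ that is injective on orbits, and Corollary~\ref{cor:borsuk_braid_r2} then converts this into the existence of the pair $(\alpha,\beta)$ (Lemma~\ref{lem:algebra}). The proof of Proposition~\ref{prop:2} then feeds $\alpha$, $\beta$ and powers of $g$ into the generators, with a small case analysis on the parity of $\theta_\tau(v)$ and, when needed, a redistribution of $\alpha^{\pm1}$ onto one commutator pair $[a_j,a_{\widehat\jmath}]$ using $\beta\alpha\beta^{-1}=\alpha^{-1}$. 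If you want to keep a purely algebraic route, you would need to exhibit concrete braids $\alpha,\beta$ and verify $\alpha\beta\alpha\beta^{-1}=1$ directly from the presentation in Theorem~\ref{th:bzn_rtwo}, which is substantially harder than the abelianised bookkeeping you carried out. (A minor side point: with $2k+1$ generators and the single relation $4k\bar g=4k\bar A_1+\cdots+4k\bar A_{2k-1}+2k\bar A_{2k}$, the abelianisation of $B_{\zn}(\rtwo)$ is $\z^{2k}\oplus\z_{2k}$ rather than $\z^{2k}$; this does not affect your consistency check, but it is not the group you stated.)
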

	
	\begin{proposition}\label{prop:3}
		Let $M$ be a compact surface without boundary that admits a free action $\tau$ of $\zn$. With the notation of~(\ref{it:orient})--(\ref{it:nonorienteven}) above, suppose that $M_\tau$ is non-orientable, $n=4k+2$ for some $k \in \mathbb{N}$, and $\theta_{\tau}(\widehat{\delta})= \overline{2k+1}$. Then $(M,\zn,\tau;\rtwo)$ has the Borsuk-Ulam property.
	\end{proposition}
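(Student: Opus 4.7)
The plan is to argue by contradiction via Corollary~\ref{cor:borsuk_braid_r2}. Suppose that $(M,\zn,\tau;\rtwo)$ does not have the Borsuk-Ulam property, so that there exists a homomorphism $\psi \colon \pi_1(M_\tau) \to B_{\zn}(\rtwo)$ with $\pi_2 \circ \psi = \theta_\tau$. Since $\theta_\tau(\widehat{\delta}) = \overline{2k+1}$, by Remarks~\ref{rems:gensbzn}(\ref{it:gensbznd}) we may write $\psi(\widehat{\delta}) = w g^{2k+1}$ for some $w \in P_n(\rtwo)$. Moreover, inspecting the presentations in cases~(\ref{it:nonorientodd}) and~(\ref{it:nonorienteven}), one sees that $\widehat{\delta}^2$ lies in the commutator subgroup of $\pi_1(M_\tau)$: in case~(\ref{it:nonorientodd}) directly from $c^2 = \bigl([a_1,a_2]\cdots[a_{2m-1},a_{2m}]\bigr)^{-1}$, and in case~(\ref{it:nonorienteven}) from the identity $u^2 = (uvuv^{-1})\cdot[v,u^{-1}]$ combined with the defining relation. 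Since $\psi$ sends commutators to commutators, $\psi(\widehat{\delta})^2$ is trivial in $B_{\zn}(\rtwo)_{\ab}$, and the proof reduces to showing that this triviality is impossible.

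The central step is the computation of $B_{\zn}(\rtwo)_{\ab}$ from the presentation of Theorem~\ref{th:bzn_rtwo}. The relations~(\ref{it:relA}) all collapse. The first family in~(\ref{rel:II}) forces $A_{i,j}^{\ab}$ to depend only on $d = j-i$; denote this common value by $C_d$ for $1\leq d\leq n-1$. The second family in~(\ref{rel:II}) then imposes the symmetry $C_d = C_{n-d}$. Abelianising relation~(\ref{rel:IV}) and pairing indices $d$ with $n-d$, each off-diagonal pair contributes $n\, C_d$, while the self-paired middle index $d = 2k+1$ (which occurs because $n = 4k+2$ is twice an odd number) contributes $(2k+1)\, C_{2k+1}$. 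The resulting relation is
\begin{equation*}
n\, g^{\ab} \;=\; n\,(C_1 + C_2 + \cdots + C_{2k}) \;+\; (2k+1)\, C_{2k+1}.
\end{equation*}
A Smith normal form calculation then gives $B_{\zn}(\rtwo)_{\ab}\cong \mathbb{Z}^{2k+1} \oplus \mathbb{Z}/(2k+1)$, whose torsion subgroup has \emph{odd} order.

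To close the argument, write $w^{\ab} = \sum_{d=1}^{2k+1} w_d\, C_d$ with $w_d \in \mathbb{Z}$, and use the above relation to substitute for $n\, g^{\ab}$, obtaining
\begin{equation*}
\bigl(\psi(\widehat{\delta})^2\bigr)^{\ab} \;=\; 2w^{\ab} + n\, g^{\ab} \;=\; \sum_{d=1}^{2k}(2w_d + n)\, C_d \;+\; (2w_{2k+1} + 2k+1)\, C_{2k+1}.
\end{equation*}
Since this expression has vanishing $g^{\ab}$-coefficient and the sole relation defining $B_{\zn}(\rtwo)_{\ab}$ has $g^{\ab}$-coefficient $n\neq 0$, vanishing in $B_{\zn}(\rtwo)_{\ab}$ forces each coefficient above to be zero in $\mathbb{Z}$; in particular $2w_{2k+1} + (2k+1) = 0$, which is impossible because $2k+1$ is odd. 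The main obstacle in this plan is the abelianisation computation itself: the delicate point is that the middle term $(2k+1)\, C_{2k+1}$ in the abelianised relation~(\ref{rel:IV}) appears precisely when $n\equiv 2 \pmod 4$, and it is exactly this odd middle coefficient that obstructs the existence of $\psi$.
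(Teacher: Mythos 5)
Your argument is correct, and it reaches the same parity contradiction as the paper, but by a more structural route. The paper's proof also assumes $\psi$ exists and writes $\psi(\widehat{\delta})=w_{\widehat{\delta}}\,g^{2k+1}$, but instead of abelianising $B_{\zn}(\rtwo)$ it applies to the full surface relation the single homomorphism $\varepsilon\colon\thinspace P_{4k+2}(\rtwo)\to\z$ defined by $\varepsilon(A_{i,j})=1$; since $\varepsilon$ is conjugation-invariant, all commutator contributions vanish and one is left with $2\varepsilon(w_{\widehat{\delta}})+\varepsilon(\Delta_{4k+2}^2)=0$, where $\varepsilon(\Delta_{4k+2}^2)=(2k+1)(4k+1)$ is odd. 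You instead first reduce to the assertion that $\psi(\widehat{\delta})^{2}$ dies in $B_{\zn}(\rtwo)_{\ab}$ (your check that $\widehat{\delta}^{2}$ is a product of commutators in both non-orientable cases is correct), and then compute $B_{\zn}(\rtwo)_{\ab}$ in full from the presentation of Theorem~\ref{th:bzn_rtwo}; your abelianised relations, the identification $C_d=C_{n-d}$, the middle term $(2k+1)C_{2k+1}$, and the final coefficient argument are all sound, the last because the unique relation vector has $g^{\ab}$-coefficient $n\neq 0$, so an element with vanishing $g^{\ab}$-coefficient lies in the relation lattice only if it is zero. The two proofs are tightly related: $\varepsilon$ is exactly the sum-of-coordinates functional $\sum_d w_d$ on your basis $C_1,\dots,C_{2k+1}$, and summing your equations $2w_d+n=0$ ($1\leq d\leq 2k$) and $2w_{2k+1}+(2k+1)=0$ recovers the paper's single equation $2\varepsilon(w_{\widehat{\delta}})+(2k+1)(4k+1)=0$. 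What your version buys is the finer, coefficientwise obstruction and the structural by-product $B_{\zn}(\rtwo)_{\ab}\cong\z^{2k+1}\oplus\z/(2k+1)$, at the cost of a longer computation; note that the Smith normal form step is not actually needed for the contradiction, only the observation about the $g^{\ab}$-coefficient of the sole relation.
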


	To prove Propositions~\ref{prop:1}--\ref{prop:3}, we will apply Corollary~\ref{cor:borsuk_braid_r2} to the case where $G$ is the finite cyclic group $\{\overline{0}, \overline{1}, \overline{2}, \ldots, \overline{n-1} \}$. In what follows, we shall identify $G$ with the subgroup of $\Sigma_n$ generated by the $n$-cycle 
	$(1,n,\ldots,2)$, and the generator $\overline{1}$ of $G$ with this $n$-cycle.
	
	\begin{proof}[Proof of Proposition~\ref{prop:1}]
		We will define a homomorphism $\psi \colon\thinspace \pi_1(M_\tau) \to B_{\zn}(\rtwo)$ for which the diagram~(\ref{diag:borsuk_r2}) is commutative, from which it will follow by Corollary~\ref{cor:borsuk_braid_r2} that $(M,\zn,\tau;\rtwo)$ does not have the Borsuk-Ulam property. With respect to the cases~(\ref{it:orient})--(\ref{it:nonorienteven}), let $\psi\colon\thinspace \pi_1(M_\tau) \to B_{\zn}(\rtwo)$ be the map defined on the generators of $\pi_1(M_\tau)$ as follows:
		\begin{enumerate}
			\item $\psi(a_i)= g^{b_i}$ for $i=1,\ldots, 2m$, where $g\in B_{\z_n}(\rtwo)$ 
			is as defined in Remark~\ref{rems:gensbzn}(\ref{it:gensbzna}), 
			 and $0\leq b_i\leq n-1$ is such that $\theta_\tau(a_i)= (1,n,\ldots,2)^{b_i}$.
			\item $\psi(\widehat{\delta})=1$ in cases~(\ref{it:nonorientodd}) and~(\ref{it:nonorienteven}).
			\item $\psi(v)=g^{w}$, where $0\leq w\leq n-1$ is such that $\theta_\tau(v)=(1,n,\ldots,2)^{w}$ in case~(\ref{it:nonorienteven}).
		\end{enumerate}
		Noting that $\theta_\tau(\widehat{\delta})$ is trivial by hypothesis and $\pi_2(g)=(1,n,\ldots,2)$, one may check in each of the three cases~(\ref{it:orient})--(\ref{it:nonorienteven}) that $\psi$ defines a homomorphism for which the diagram~(\ref{diag:borsuk_r2}) is commutative. 
	\end{proof}
	
	Although the conclusion of Proposition~\ref{prop:2} is the same as that of Proposition~\ref{prop:1}, its proof is somewhat different in nature. We first give a geometric proof of Proposition~\ref{prop:2} in a specific case. This will then enable us to give an algebraic proof in the general case. We regard $\torus$ as the quotient space $\rtwo/\z^2$. Notationally, we shall not distinguish between an element of $\rtwo$ and its image under the canonical projection in $\torus$. Let $x$ and $y$ denote the pointed homotopy classes of the loops in $\torus$ given by $(t,0)_{t\in [0,1]}$ and $(0,t)_{t\in [0,1]}$ respectively. Then $\pi_1(\torus)=\left\langle x, y \, | \, xyx^{-1}y^{-1} \right\rangle$.
	
	\begin{lemma}\label{lem:geometry}
		Given $k \in \mathbb{N}$, let $\tau \colon\thinspace \torus \to \torus$ be the map defined by $\tau(a,b)=(-a,a+b+\frac{1}{4k})$ for all $(a,b)\in \torus$. Then $\tau$ defines a free action of $\z_{4k}$ on $\torus$ such that the orbit space is the Klein bottle $\klein$, and for which induced short exact sequence~(\ref{seq:main}) associated to the covering $p_\tau \colon\thinspace \torus \to \klein$ is of the form:
		\begin{equation}\label{seq:geometry}\xymatrix{
				1 \ar[r] &
				\pi_1 (\torus)=\left\langle x, y \, \mid \, xyx^{-1}y^{-1} =1 \right\rangle \ar[r]^-{(p_\tau)_\#} &
				\pi_1(\klein)=\left\langle u, v \, \mid \, uvuv^{-1} =1 \right\rangle \ar[r]^-{\theta_\tau}  & \mathbb{Z}_{4k} \ar[r] & 1, 	
			}\; \end{equation}
		where:
		\begin{equation}\label{seq:geometry_roles}
			\text{$(p_\tau)_\#\colon\thinspace \begin{cases}
					x \longmapsto uv^{2k} \\
					y \longmapsto v^{4k}
				\end{cases}$ 
				and \quad
				$\theta_\tau\colon\thinspace \begin{cases}
					u \longmapsto \overline{2k} \\
					v \longmapsto \overline{1}.
				\end{cases}$}
		\end{equation}
		Moreover, $(\torus,\mathbb{Z}_{4k},\tau;\rtwo)$ does not have the Borsuk-Ulam property.
	\end{lemma}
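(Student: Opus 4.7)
\textbf{First,} I would verify that $\tau$ gives a free $\z_{4k}$-action and identify the orbit space. Iterating, $\tau^2(a,b)=(a,b+\tfrac{1}{2k})$, and by induction $\tau^{2j}(a,b)=(a,b+\tfrac{j}{2k})$ and $\tau^{2j+1}(a,b)=(-a,a+b+\tfrac{2j+1}{4k})$, so $\tau^{4k}=\mathrm{id}$ on $\torus$. Analysing the fixed-point equation of $\tau^m$ for $1\le m\le 4k-1$ by parity rules out fixed points: for $m=2j$, one would need $\tfrac{j}{2k}\in\z$, which fails for $1\le j\le 2k-1$; for $m=2j+1$, one would need $2a\in\z$ and $a+\tfrac{2j+1}{4k}\in\z$, which forces $a\in\{0,\tfrac12\}$ and then leads to the equation $\tfrac{2k\varepsilon+2j+1}{4k}\in\z$ with $\varepsilon\in\{0,1\}$, impossible since the numerator is odd and the denominator even. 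The quotient is then a compact surface without boundary of Euler characteristic $\chi(\torus)/4k=0$, and since the linear part of $\tau$ is $\begin{pmatrix}-1&0\\1&1\end{pmatrix}$ of determinant $-1$, the action is orientation-reversing, so $M_\tau\cong\klein$.

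\textbf{Second,} to establish~(\ref{seq:geometry}) and~(\ref{seq:geometry_roles}), I would realise $\pi_1(\klein)$ as the group $\Gamma$ of deck transformations of the universal cover $\rtwo\to\klein$, generated by the unit translations $t_1,t_2$ together with the lift $\alpha(a,b)=(-a,a+b+\tfrac{1}{4k})$ of $\tau$. Direct computation yields $\alpha t_1\alpha^{-1}=t_1^{-1}t_2$, $\alpha t_2=t_2\alpha$ and $\alpha^{4k}=t_2$; in particular $\alpha^{2k}(a,b)=(a,b+\tfrac12)$ is a translation and commutes with $t_1$. Setting $v=\alpha$ and $u=t_1\alpha^{-2k}$, a short manipulation using these relations gives $uvuv^{-1}=1$, yielding the Klein bottle presentation. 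The identities $t_1=uv^{2k}$ and $t_2=v^{4k}$ furnish $(p_\tau)_\#$, and reading off the cosets of $u,v$ modulo $\langle t_1,t_2\rangle\cong\z^2$ provides the formulas for $\theta_\tau$ in~(\ref{seq:geometry_roles}).

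\textbf{Third,} for the failure of the Borsuk-Ulam property, by Corollary~\ref{cor:borsuk_braid_r2} it suffices to construct a $(\tau,\tau_2)$-equivariant map $F\colon\thinspace\torus\to F_{4k}(\rtwo)$. I would take
\[
F(a,b)=\bigl(f(a,b),f(\tau(a,b)),\ldots,f(\tau^{4k-1}(a,b))\bigr),\qquad f(a,b)=(2+\sin 2\pi a)(\cos 2\pi b,\sin 2\pi b),
\]
for which the cyclic $\tau_2$-equivariance is automatic. The substantive task is verifying that the $4k$ coordinates of $F(a,b)$ are pairwise distinct. One sees that $f(\tau^{2j}(a,b))$ lies on the circle of radius $2+\sin 2\pi a$ and $f(\tau^{2j+1}(a,b))$ on the circle of radius $2-\sin 2\pi a$; for $a\notin\{0,\tfrac12\}$ the two radii differ, so the circles are disjoint and on each the $2k$ angular coordinates $b+\tfrac{j}{2k}$ (resp.\ $a+b+\tfrac{2j+1}{4k}$) are distinct mod $1$; for $a\in\{0,\tfrac12\}$ both radii equal $2$, but the pooled angles simplify to $b+\tfrac{\ell}{4k}$ for $\ell=0,\ldots,4k-1$, giving $4k$ distinct residues mod $1$.

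\textbf{The main difficulty} is the construction of $f$: any map depending only on $b$ collapses a $\tau$-orbit to at most $2k$ distinct points. The trick is to use a radial modulation by a function that is odd in $a$, such as $\sin 2\pi a$, which separates the iterates with first coordinate $a$ from those with first coordinate $-a$ onto concentric circles of different radii. The degeneracy at the fixed points $a\in\{0,\tfrac12\}$ of $a\mapsto -a$ is handled separately by an angular-residue count, which works because at those values the $\tau$-orbit is already evenly distributed in the $b$-direction.
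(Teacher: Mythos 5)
Your proof is correct, but it takes a genuinely different route from the paper's in its middle part. For the identification of the orbit space and of the maps in~(\ref{seq:geometry_roles}), the paper constructs explicit intermediate coverings $\torus \xrightarrow{f} \torus \xrightarrow{g} \klein$ (a $2k$-sheeted self-cover of the torus followed by the orientation double cover), shows that $\tau$ is a deck transformation of the composite $g\circ f$, reads off $(p_\tau)_\#$ from $g_\#\circ f_\#$, and then verifies exactness of~(\ref{seq:geometry}) by a direct computation that $\im{(p_\tau)_\#}=\ker{\theta_\tau}$. You instead identify $M_\tau\cong\klein$ abstractly (Euler characteristic zero plus orientation-reversal of $\tau$) and then work in the deck group of the universal cover $\rtwo\to\klein$, realising $u$ and $v$ as explicit affine transformations; this makes exactness automatic and replaces the paper's hands-on kernel computation by the relations $\alpha t_1\alpha^{-1}=t_1^{-1}t_2$, $\alpha^{4k}=t_2$. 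Your route is arguably cleaner, at the small cost of having to know that $\langle u,v\mid uvuv^{-1}\rangle$ presents the deck group (which follows since that group is the Klein bottle group and is Hopfian). For the final assertion, both arguments are of the same type --- a map of $\torus$ into an annulus whose radius depends only on $a$ and separates $a$ from $-a$, with angle $2\pi b$ --- the paper using a piecewise-linear radius $r(a)$ and you a smooth one, $2+\sin 2\pi a$, with the same case analysis at $a\in\{0,\tfrac12\}$. One small point: you invoke Corollary~\ref{cor:borsuk_braid_r2} here, but no braid-group machinery is needed --- a map that is injective on every orbit contradicts the Borsuk-Ulam property directly from the definition, which is exactly how the paper phrases it.
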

	
	\begin{proof}
		Let $k\in \mathbb{N}$, and let $\tau \colon\thinspace \torus \to \torus$ be the map defined by $\tau(a,b)=(-a,a+b+\frac{1}{4k})$ for all $(a,b)\in \torus$. For each  $0 \leq i \leq 4k$, we have: 
		\begin{equation}\label{eq:tau_i}
			\tau^i(a,b)=\begin{cases}
				(a,b+\frac{i}{4k}) &\text{if $i$ is even} \\
				(-a,a+b+\frac{i}{4k}) &\text{if $i$ is odd}.
			\end{cases}
		\end{equation}
		One may check that if $1 \leq i < j \leq 4k$, then $\tau^i(a,b) \neq \tau^j(a,b)$ (note that if $a=1/2$ and $a+\frac{j-i}{4k}\in \z$ then $i$ and $j$ have the same parity), and since $\tau^{4k}=\operatorname{\text{Id}}_{\torus}$, it follows that $\tau$ defines a free action of $\z_{4k}$ on $\torus$. Let $g\colon\thinspace \torus \to \klein$ be the double covering map whose non-trivial associated deck transformation $\tau_1 \colon\thinspace \torus \to \torus$ is given by $\tau_1(a,b)=(-a,b+\frac{1}{2})$, so $g\circ \tau_1=g$, and let $u$ and $v$ be the homotopy classes determined by the loops $(g(t,0))_{t \in [0,1]}$ and $(g(0,t/2))_{t \in [0,1]}$ respectively. Then $\pi_1(\klein)=\left\langle u, v \, | \, uvuv^{-1}=1 \right\rangle$, and the induced homomorphism  $g_\# \colon\thinspace \pi_1(\torus) \to \pi_1(\klein)$ is given by $g_\#(x)=u$ and $g_\#(y)=v^2$.
		%
		%
		Let $f \colon\thinspace \torus \to \torus$ be defined by $f(a,b)=(a,ka+2kb)$ for all $(a,b)\in \torus$. The induced homomorphism $f_\# \colon\thinspace \pi_1(\torus) \to \pi_1(\torus)$ is given by $f_\#(x)=xy^k$ and $f_\#(y)=y^{2k}$. 
		%
		%
		Notice that $f$ is a $2k$-sheeted covering of $\torus$, and that $f_\#$ is injective. By construction, the map $g \circ f \colon\thinspace \torus \to \klein$ is a $4k$-sheeted covering of $\klein$ by $\torus$, and consequently the deck transformation group associated to the covering $g \circ f$ is of order $4k$. We claim that $\tau$ is a deck transformation for the covering map $g \circ f$. If this is the case, since $\tau$ defines a free action of $\z_{4k}$ on $\torus$, it follows that group of deck transformations for the covering map $g \circ f$ is isomorphic to $\z_{4k}$ and is generated by $\tau$, thus the associated orbit space $\torus_\tau$ is homeomorphic to $\klein$, and the corresponding quotient map $p_\tau$ is equal, up to homeomorphism, to $g \circ f$. To prove the claim, for all $(a,b) \in \torus$, we have:
		\begin{align*}
			(g \circ f)(\tau(a,b)) &= (g\circ f)\left(-a,a+b+\frac{1}{4k}\right)= g\left(-a,-ka+2k\left(a+b+\frac{1}{4k}\right)\right)\\
			&=g\left(-a,ka+2kb + \frac{1}{2}\right)=(g \circ \tau_1)\left(a,ka+2kb\right)= g(a,ka+2kb)=(g \circ f)(a,b).
		\end{align*}
		This proves the claim. Thus $(p_\tau)_\# \colon\thinspace \pi_1 (\torus) \to \pi_1 ( \klein)$ is injective, and satisfies the first equality of~(\ref{seq:geometry_roles}).
		Let $\theta_\tau\colon\thinspace \pi_1(\klein) \to \z_{4k}$ be the map defined by~(\ref{seq:geometry_roles}). One may check that $\theta_\tau$ is a well-defined surjective homomorphism. Let us show that the sequence~(\ref{seq:geometry}) is short exact. To do so, it remains to show that $\operatorname{\text{Im}}( (p_\tau)_\#)= \ker{\theta_\tau}$. First let $z\in \operatorname{\text{Im}}((p_\tau)_\#)$. Then there exist $m,n\in \z$ such that $z=(p_\tau)_\#(x^m y^n)=(uv^{2k})^m (v^{4k})^n$, and thus $\theta_\tau(z)= m(\overline{2k}+\overline{2k})+n\ldotp  \overline{4k}=\overline{0}$, hence $\operatorname{\text{Im}}( (p_\tau)_\#) \subset \ker{\theta_\tau}$. Conversely, let $y\in \ker{\theta_\tau}$. Now $y\in \pi_1(\klein)$, so there exist $m,n\in \z$ such that $y=u^m v^n$, and $\overline{0}=\theta_\tau(y)=m \ldotp \overline{2k}+ n\ldotp \overline{1}$. Thus $2mk+n \equiv 0 \bmod{4k}$, and there exists $l \in \z$ such that $n=2mk + 4lk$.
		%
		%
		%
		%
		Since $vuv^{-1}=u^{-1}$, we see that $v^2uv^{-2}=u$, so $u$ and $v^2$ commute, and thus:
		\begin{equation*}
			y=u^m v^n=u^m (v^{2k})^m (v^{4k})^l=(uv^{2k})^m (v^{4k})^l=((p_\tau)_\#(x))^m ((p_\tau)_\#(y))^l=(p_\tau)_\#(x^m y^l) \in \operatorname{\text{Im}}( (p_\tau)_\#).
		\end{equation*}
		Therefore $\ker{\theta_\tau} \subset \operatorname{\text{Im}}( (p_\tau)_\#)$. Hence $\ker{\theta_\tau}= \operatorname{\text{Im}}( (p_\tau)_\#)$, and we conclude that the sequence~(\ref{seq:geometry}) is short exact.
		We conclude that the short exact sequence associated to the covering $p_\tau \colon\thinspace \torus \to \klein$ given by~(\ref{seq:main}) is~(\ref{seq:geometry}). 
		
		
		It remains to show that $(\torus,\z_{4k},\tau;\rtwo)$ does not have the Borsuk-Ulam property. It suffices to exhibit an explicit map $h\colon\thinspace \torus \to \rtwo$ such that $h(\tau^i(a,b)) \neq h(\tau^j(a,b))$ for each $(a,b)\in \torus$ and $0 \leq i < j \leq 4k-1$. First note that by~(\ref{eq:tau_i}), the orbit of a point $(a,b) \in \torus$ by the action of $\z_{4k}$ generated by $\tau$ belongs to the union of the vertical circles $((a,t))_{t\in [0,1]}$ and $((1-a,t))_{t\in [0,1]}$, and these two circles coincide if $a\in \{ 0, \frac{1}{2}\}$. Let $C\subset \rtwo$ be the annulus defined with respect to polar coordinates by $C=\{ (r,\theta) \, \mid \, \text{$1\leq r\leq 2$ and $\theta \in [0,2\pi)$} \}$, and let $h \colon\thinspace \torus \to \rtwo$ be defined by $h(a,b)=(r(a), 2\pi b)$ for all $(a,b)\in \torus$ (we take $0\leq a,b\leq 1$), where:
		\begin{equation*}
			r(a)=\begin{cases}
				(3-4a)/2 & \text{if $0 \leq a \leq \frac{1}{4}$}\\
				(1+4a)/2 & \text{if $\frac{1}{4} \leq a \leq \frac{3}{4}$}\\
				(7-4a)/2 & \text{if $\frac{3}{4} \leq a \leq 1$.}
			\end{cases}
		\end{equation*}
		One may check that $h$ is well defined and continuous. Further, its restriction to each circle $((a,t))_{t\in [0,1]}$ of $\torus$, where $0\leq a \leq 1$, is injective, and that if $a\in (0,1) \setminus \{ \frac{1}{2}\}$ then the two circles $(h(a,t))_{t\in [0,1]}$ and $(h(1-a,t))_{t\in [0,1]}$ are disjoint. 
		We conclude that the restriction of the map $h$ to each orbit of the action $\tau$ of $\z_{4k}$ is injective, and this completes the proof of the lemma.
		%
		%
	\end{proof}

	\begin{lemma}\label{lem:algebra} 
		Given $k \in \mathbb{N}$, there exist $\alpha,\beta \in B_{\z_{4k}}(\rtwo)$ such that $\pi_2(\alpha)=\overline{2k}$, $\pi_2(\beta)=\overline{1}$ and $\alpha\beta\alpha\beta^{-1}=1$ in $B_{\z_{4k}}(\rtwo)$.
	\end{lemma}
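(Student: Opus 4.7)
The plan is to extract $\alpha$ and $\beta$ directly from the homomorphism furnished by the previous lemma via Corollary~\ref{cor:borsuk_braid_r2}. More precisely, Lemma~\ref{lem:geometry} establishes that the quadruple $(\torus, \z_{4k}, \tau; \rtwo)$ does not have the Borsuk-Ulam property, where $\tau$ is the specific free $\z_{4k}$-action producing the orbit space $\klein$ with the short exact sequence~(\ref{seq:geometry}). Since $\z_{4k}$ is Abelian, Corollary~\ref{cor:borsuk_braid_r2} applies and yields a homomorphism
\[
\psi\colon\thinspace \pi_1(\klein)=\left\langle u,v \,\mid\, uvuv^{-1}=1\right\rangle \longrightarrow B_{\z_{4k}}(\rtwo)
\]
such that the diagram~(\ref{diag:borsuk_r2}) is commutative, i.e.\ $\pi_2 \circ \psi = \theta_\tau$.

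The next step is simply to define $\alpha=\psi(u)$ and $\beta=\psi(v)$ and read off the required properties. Using the formulas for $\theta_\tau$ recorded in~(\ref{seq:geometry_roles}), commutativity of~(\ref{diag:borsuk_r2}) gives
\[
\pi_2(\alpha)=\pi_2(\psi(u))=\theta_\tau(u)=\overline{2k}\quad\text{and}\quad \pi_2(\beta)=\pi_2(\psi(v))=\theta_\tau(v)=\overline{1}.
\]
Furthermore, since $\psi$ is a homomorphism and the relator $uvuv^{-1}$ is trivial in $\pi_1(\klein)$, we obtain
\[
\alpha\beta\alpha\beta^{-1}=\psi(u)\psi(v)\psi(u)\psi(v)^{-1}=\psi(uvuv^{-1})=1
\]
in $B_{\z_{4k}}(\rtwo)$, which is exactly what is required.

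There is essentially no obstacle here: the entire content of the algebraic statement is already packaged in the geometric Lemma~\ref{lem:geometry} together with the braid-theoretic criterion of Corollary~\ref{cor:borsuk_braid_r2}. The only minor point to mention explicitly in the written proof is why Corollary~\ref{cor:borsuk_braid_r2} applies, namely that $\z_{4k}$ is Abelian, and that the commutativity of the lower triangle of~(\ref{diag:borsuk_r2}) translates the defining relation of $\pi_1(\klein)$ and the values of $\theta_\tau$ on $u$ and $v$ into precisely the three conditions claimed for $\alpha$ and $\beta$.
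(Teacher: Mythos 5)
Your proposal is correct and follows exactly the same route as the paper: invoke Lemma~\ref{lem:geometry} and Corollary~\ref{cor:borsuk_braid_r2} to obtain a homomorphism $\psi\colon\thinspace \pi_1(\klein)\to B_{\z_{4k}}(\rtwo)$ with $\pi_2\circ\psi=\theta_\tau$, then set $\alpha=\psi(u)$ and $\beta=\psi(v)$. You merely spell out the verifications that the paper leaves implicit.
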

	
	\begin{proof}
Let $\theta_\tau \colon\thinspace \pi_1(\klein) \to \z_{4k}$ be the homomorphism defined in~(\ref{seq:geometry_roles}). By Corollary~\ref{cor:borsuk_braid_r2} and Lemma~\ref{lem:geometry}, there exists a homomorphism $\psi \colon\thinspace \pi_1(\klein) \to B_{\z_{4k}}(\rtwo)$ such that the following diagram is commutative:
		$$\xymatrix{
			\left\langle u,v \, | \, uvuv^{-1}=1 \right\rangle \ar@{.>}[rr]^-{\psi} \ar[rd]_-{\theta_\tau}  & & B_{\z_{4k}}(\rtwo) \ar[ld]^-{\pi_2}  \\
			&   \z_{4k}. &
		}$$
		%
		%
		It then suffices to take $\alpha=\psi(u)$ and $\beta=\psi(v)$.
	\end{proof}
	
	
	\begin{proof}[Proof of Proposition~\ref{prop:2}]
		We will define a homomorphism $\psi\colon\thinspace \pi_1(M_\tau) \to B_{\z_{4k}}(\rtwo)$ for which the diagram~(\ref{diag:borsuk_r2}) is commutative, from which it will follow from Corollary~\ref{cor:borsuk_braid_r2} that $(M,\z_{4k},\tau;\rtwo)$ does not have the Borsuk-Ulam property. For this, we make use of the elements $\alpha$ and $\beta$ given in Lemma~\ref{lem:algebra} and the element $g$ given in Remark~\ref{rems:gensbzn}(\ref{it:gensbzna}). 
		We endow $\pi_1(M_\tau)$ with one of the presentations of cases~(\ref{it:nonorientodd}) and~(\ref{it:nonorienteven}) given at the beginning of this section. To simplify the analysis, we write $\pi_1(M_\tau)=\left\langle \left. \widehat{\delta},v,a_1,a_2,\ldots,a_{2m-1},a_{2m} \, \right\rvert \, \widehat{\delta} v \widehat{\delta} v^{-1}[a_1,a_2]\cdots [a_{2m-1},a_{2m}]=1 \right\rangle$, where we take $v=1$ in case~(\ref{it:nonorientodd}). By hypothesis, $\theta_{\tau}(\widehat{\delta})= \overline{2k}$. Let $0\leq z \leq 4k-1$ be such that $\theta_\tau(v)=\overline{z}$ (so $z=0$ if $v=1$). For $i=1,\ldots, 2m$, let $b_i\in \z$ be such that $\theta_{\tau}(a_i)=\overline{b_i}$, and let $\psi(v)=\beta^{z}$. If $z$ is odd, we define $\psi$ on the remaining generators of $\pi_1(M_\tau)$ as follows:
		\begin{equation}\label{eq:psiodd}
			\psi\colon\thinspace \begin{cases}
				\widehat{\delta} \longmapsto \alpha \\
				a_i \longmapsto g^{b_i}, \;\text{where $1 \leq i \leq 2m$.}
			\end{cases}
		\end{equation}
		If $z$ is even, then by the surjectivity of $\theta_{\tau}$ there exist $1\leq j\leq 2m$ and $l\in \z$ odd such that $\theta_\tau(a_j)=\overline{l}$. If $j$ is odd (resp.\ even), let $\widehat{\jmath}=j+1$ (resp.\ $\widehat{\jmath}=j-1$). Then we define $\psi$ on the remaining generators of $\pi_1(M_\tau)$ as follows:
		\begin{equation}\label{eq:psieven}
			\psi\colon\thinspace \begin{cases}
				\widehat{\delta} \longmapsto \alpha^{(-1)^j} \\
				a_j \longmapsto \beta^{l}   \\
				a_{\widehat{\jmath}} \longmapsto \alpha^{-1} \beta^{b_{\widehat{\jmath}}+2k} \\
				a_i \longmapsto g^{b_i}\; \text{if $i \neq j,\widehat{\jmath}$.}
			\end{cases}
		\end{equation}
		Let us show that $\psi$ extends to a homomorphism from $\pi_1(M_\tau)$ to $B_{\z_{4k}}(\rtwo)$. If $z$ is odd, then we are in case~(\ref{it:nonorientodd}), and by~(\ref{eq:psiodd}), we have:
		\begin{equation*}
			\psi(\widehat{\delta} v \widehat{\delta} v^{-1}[a_1,a_2]\cdots [a_{2m-1},a_{2m}])= \psi(uvuv^{-1})= \alpha \ldotp \beta^z \alpha \beta^{-1}=\alpha \ldotp \alpha^{-1}=1,
		\end{equation*}
		where we have used Lemma~\ref{lem:algebra} and the fact that $z$ is odd. If $z$ is even, then by~(\ref{eq:psieven}), we have:
		\begin{align*}
			\psi(\widehat{\delta} v \widehat{\delta} v^{-1}[a_1,a_2]\cdots [a_{2m-1},a_{2m}])&= \psi(\widehat{\delta} v \widehat{\delta} v^{-1}[a_j,a_{\jmath}]^{(-1)^{j+1}})\\
			&= \alpha^{(-1)^j} \ldotp \beta^z \alpha^{(-1)^j} \beta^{-z} (\beta^l \alpha^{-1} \beta^{b_{\widehat{\jmath}}+2k} \beta^{-l} \ldotp \beta^{-b_{\widehat{\jmath}}-2k} \alpha)^{(-1)^{j+1}}\\
			&=\alpha^{2(-1)^j} \ldotp \alpha^{(-1)^{j+1}}=1,
		\end{align*}
		where we have used Lemma~\ref{lem:algebra} and the fact that $z$ is even and $l$ is odd. This proves that $\psi$ is a homomorphism. One may 
		check that $\pi_2\circ \psi=\theta_{\tau}$, and hence the diagram~(\ref{diag:borsuk_r2}) is commutative. The result then follows from Corollary~\ref{cor:borsuk_braid_r2}.
	\end{proof}
	
	\begin{proof}[Proof of Proposition~\ref{prop:3}]
		Suppose on the contrary that $(M,\z_{4k+2},\tau;\rtwo)$ does not have the Borsuk-Ulam property. Then by Corollary~\ref{cor:borsuk_braid_r2} there exists a homomorphism $\psi \colon\thinspace \pi_1(M_\tau) \to B_{\z_{4k+2}}(\rtwo)$ such that the diagram~(\ref{diag:borsuk_r2}) is commutative. Suppose that $M_{\tau}$ is non-orientable of odd (resp.\ even) genus, so that $\pi_1(M_{\tau})$ is as in case~(\ref{it:nonorientodd}) (resp.\ case~(\ref{it:nonorienteven})) given at the beginning of this section. By hypothesis, $\theta_{\tau}(\widehat{\delta})= \overline{2k+1}$, where $\widehat{\delta}=c$ (resp.\ $\widehat{\delta}=u$). From Theorem~\ref{th:bzn_rtwo}, for $i=1,\ldots,2m$, there exist $w_i\in P_{4k+2}(\rtwo)$ and $0\leq b_i<n$, $w_{\widehat{\delta}} \in P_{4k+2}(\rtwo)$ (resp.\ $w_v \in P_{4k+2}(\rtwo)$ and $0\leq b<n$) such that for $i=1,\ldots,2m$, $\theta_\tau(a_i)=\overline{b_i}$ and $\psi(a_i)=w_i g^{b_i}$,  $\psi(\widehat{\delta})=w_{\widehat{\delta}} g^{2k+1}$, and if $M_{\tau}$ is of even genus, $\theta_\tau(v)=\overline{b}$ and $\psi(v)=w_v g^{b}$.
		%
		%
		%
		The evaluation map $\varepsilon \colon\thinspace P_{4k+2}(\rtwo) \to \z$ defined on the generating set $(A_{i,j})_{1\leq i<j\leq n}$ of $P_{4k+2}(\rtwo)$ by $\varepsilon(A_{i,j})=1$ for all $1\leq i<j\leq n$ extends to a well-defined homomorphism that satisfies $\varepsilon(gwg^{-1})= \varepsilon(w)$ for all $w \in P_{4k+2}(\rtwo)$. For all $i=1,\ldots,m$, it follows that:
		\begin{equation}\label{eq:aux_2}
			\varepsilon \circ \psi([a_{2i-1},a_{2i}])= \varepsilon\bigl(w_{2i-1} \ldotp g^{b_{2i-1}} w_{2i} g^{-b_{2i-1}} \ldotp g^{b_{2i}} w_{2i-1}^{-1} g^{-b_{2i}} \ldotp w_{2i}^{-1}\bigr)=\varepsilon([w_{2i-1},w_{2i}])=0.
		\end{equation}
		Applying~(\ref{eq:aux_2}) and relation~(\ref{rel:IV}) of Theorem~\ref{th:bzn_rtwo}, if $M_{\tau}$ is of odd genus, we obtain:
		\begin{equation*}
			0 =\varepsilon \circ \psi\left(  c^2[a_1,a_2]\cdots [a_{2m-1},a_{2m}] \right)=\varepsilon \circ \psi(c^2)= \varepsilon(w_c \ldotp g^{2k+1} w_c g^{-2k-1} \ldotp \Delta_{4k+2}^2)= 2\varepsilon(w_c)+\varepsilon (\Delta_{4k+2}^2),
		\end{equation*}
		while if $M_{\tau}$ is of even genus, we see that:
		\begin{align*}
			0&= \varepsilon \circ \psi(uvuv^{-1}[a_1,a_2]\cdots [a_{2m-1},a_{2m}])=\varepsilon \circ \psi(uvuv^{-1})\\
			&= \varepsilon \bigl( w_u \ldotp g^{2k+1} w_v g^{-2k-1} \ldotp g^{2k+1+b} w_u g^{-2k-1-b} \ldotp
			\Delta_{4k+2}^2 \ldotp w_v^{-1}\bigr)= 2\varepsilon(w_u)+\varepsilon (\Delta_{4k+2}^2).
		\end{align*}
		Combining the two cases of odd and even genus, we see that: 
		\begin{equation}\label{eq:vareps}
			2\varepsilon(w_{\widehat{\delta}})+\varepsilon(\Delta_{4k+2}^2)=0.
		\end{equation}
		It follows from relation~(\ref{rel:IV}) of Theorem~\ref{th:bzn_rtwo} that $\varepsilon(\Delta_{4k+2}^2)=\varepsilon\left( \prod_{j=2}^{4k+2}\left( \prod_{i=1}^{j-1} A_{i,j} \right)\right)= (2k+1)(4k+1)$. Taking equation~(\ref{eq:vareps}) modulo $2$, we obtain a contradiction, and hence $(M,\z_{4k+2},\tau;\rtwo)$ has the Borsuk-Ulam property.
	\end{proof}


	

\section{Two cases where $G$ is the symmetric group}\label{sec:exam_sigma1}

	
	
	Let $N$ be a compact surface without boundary, and let $G$ be a non-trivial finite group for which there exists a surjective homomorphism $\theta\colon\thinspace \pi_1(N) \to G$. Let $\iota\colon\thinspace \ker{\theta} \to \pi_1(N)$ denote the inclusion. By standard covering space arguments, there exists a compact surface without boundary $M$ and a free action $\tau\colon\thinspace G \times M \to M$ such that the associated exact sequence given by~(\ref{seq:main}) is of the following form:
	\begin{equation}\label{eq:provenient1}
		\xymatrix{
			1 \ar[rr] && \pi_1 (M)=\ker{\theta} \ar[rr]^-{(p_\tau)_\#=\iota} && \pi_1(M_\tau)=\pi_1(N) \ar[rr]^-{\theta_\tau=\theta} && G \ar[rr] && 1. \\
	}\end{equation}
	We say that the surface $M$ and the action $\tau$ arise from $\theta$. This construction provides us with a method to construct a compact surface without boundary on which $G$ acts freely. We will consider examples where $G$ is either $\Sigma_n$ or $\z_n$, where $n>2$.
	
Let $N_1=\mathbb{T}^2 \mathop{\#} \mathbb{T}^2$ be the connected sum 
 of two copies of $\torus$, let $N_2=\mathbb{RP}^2 \mathop{\#} \mathbb{T}^2$ be the connected sum
  of $\mathbb{RP}^2$ and $\torus$, and let $\theta_1\colon\thinspace \pi_1(N_1)=\left\langle a_1,b_1,  a_2, b_2 \, | \, [a_1,b_1][a_2,b_2]=1 \right\rangle \to \Sigma_n$ and $\theta_2\colon\thinspace \pi_1(N_2)=\left\langle c, a_1,b_1  \, | \, c^2[a_1,b_1]=1 \right\rangle \to \Sigma_n$ be given by: 
	\begin{align}
		\theta_1 (a_1)& = \text{$\theta_1 (b_1)=(1,2)$, and $\theta_1 (a_2)=\theta_1 (b_2)=(1, n, \ldots, 2)$}\label{eq:n1}\\
		\theta_2 (c)&= \text{$(1,2)$, and $\theta_2 (a_1)=\theta_2 (b_1)=(1, n, \ldots, 2)$}.\label{eq:n2}
	\end{align}
	Note that $\theta_1$ and $\theta_2$ define surjective homomorphisms. For $i=1,2$, let $M_i$ and $\tau_i\colon\thinspace \Sigma_n \times M_i \to M_i$ be the compact surface without boundary and the free action arising from $\theta_i$ respectively.
	
	%
	%
	
	%
	%
	
	The following result suggests that there are many possibilities to be explored in the generalisation of the Borsuk-Ulam theorem to actions of finite groups on surfaces.

	\begin{proposition}\label{prop:ex121}
		Let $n>2$. With the above notation, we have the following:
		\begin{enumerate}
			\item\label{it:ex12a1} the quadruple $(M_1, \Sigma_n, \tau_1; \mathbb{R}^2)$ does not have the Borsuk-Ulam property.
			\item\label{it:ex12b1} for the quadruple $(M_2, \Sigma_n, \tau_2; \mathbb{R}^2)$, there does not exist a homomorphism $\psi$ for which the lower triangle of the diagram~(\ref{diag:diag_equiv}) is commutative.  
			\item\label{it:ex12c1} the quadruple $(M_2, \z_n, \tau_{\z_n}; \mathbb{R}^2)$, where the action of $\z_n$ is given by the restriction of the action of $\Sigma_n$ corresponding to the action of the cyclic subgroup $\langle (1,n\ldots,2)\rangle$, does not have the Borsuk-Ulam property.
			
		\end{enumerate}
	\end{proposition}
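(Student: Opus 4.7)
The plan is to apply Corollary~\ref{cor:borsuk_braid_r2} together with Theorem~\ref{th:main}, treating each of the three parts by a different technique.

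For part~(\ref{it:ex12a1}), I will exhibit an explicit algebraic lift. Since $\Sigma_n$ is the full symmetric group one has $B_{\Sigma_n}(\rtwo) = B_n(\rtwo)$, so by Corollary~\ref{cor:borsuk_braid_r2} it suffices to produce a homomorphism $\psi\colon \pi_1(N_1) \to B_n(\rtwo)$ with $\pi_1\circ\psi = \theta_1$. Setting $\psi(a_1) = \psi(b_1) = \sigma_1$ and $\psi(a_2) = \psi(b_2) = g$, where $g = \sigma_1\cdots \sigma_{n-1}$ is as in Remarks~\ref{rems:gensbzn}(\ref{it:gensbzna}), will do the job: each commutator $[\psi(a_i),\psi(b_i)] = 1$, so the surface relation $[a_1,b_1][a_2,b_2] = 1$ is respected, and $\pi_1\circ\psi$ matches $\theta_1$ on generators by~(\ref{eq:n1}).

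For part~(\ref{it:ex12b1}), I will argue by contradiction through the exponent-sum abelianisation. Assume that $\psi\colon \pi_1(N_2) \to B_n(\rtwo)$ is a homomorphism lifting $\theta_2$, and let $\varepsilon_B\colon B_n(\rtwo) \to \z$ be the homomorphism defined on Artin generators by $\sigma_i \mapsto 1$. Its restriction to $P_n(\rtwo)$ takes values in $2\z$, since each $A_{i,j}$ in~(\ref{eq:defaij}) has exponent sum~$2$. Because $\pi_1(\psi(c)) = (1,2)$ is a transposition, one may write $\psi(c) = p\,\sigma_1$ with $p \in P_n(\rtwo)$, so $\varepsilon_B(\psi(c))$ is odd and hence $\varepsilon_B(\psi(c)^2)$ is a nonzero even integer. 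On the other hand $\varepsilon_B(\psi([a_1,b_1])) = 0$ since commutators vanish in any abelianisation, so applying $\varepsilon_B\circ\psi$ to the relator $c^2[a_1,b_1]$ gives a nonzero value, contradicting $c^2[a_1,b_1] = 1$.

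For part~(\ref{it:ex12c1}), I will apply Theorem~\ref{th:main} through a case analysis on $n \bmod 4$. The orbit space $M_{2,\tau_{\zn}}$ is the cover of $N_2$ corresponding to $H := \theta_2^{-1}(\zn) \subset \pi_1(N_2)$. If $n$ is odd, then $(1,n,\ldots,2)$ is an even permutation, so $\zn \subset A_n$ and $\mathrm{sgn}\circ\theta_2$ coincides on the generators of $\pi_1(N_2)$ with the orientation character $\omega$ (both send $c$ to $1$ and $a_1, b_1$ to $0$); hence $\omega|_H = 0$, making $M_{2,\tau_{\zn}}$ orientable and causing condition~(2) of Theorem~\ref{th:main} to fail. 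If $n \equiv 0 \pmod 4$ then condition~(1) fails outright. In the remaining case $n \equiv 2 \pmod 4$ the orbit space is in general non-orientable (for instance, for $n = 6$ the element $(ca_1)^{n-1} \in H$ satisfies $\omega = 1$), so I will need to verify $(\theta_{\tau_{\zn}})_{\ab}(\delta) = \overline{0}$; since $\delta$ has order $2$ its image in $\zn$ lies in $\{\overline{0}, \overline{n/2}\}$, and I plan to prove triviality by rewriting the relation $c^2[a_1,b_1] = 1$ via Reidemeister--Schreier to obtain an explicit representative of $\delta$ in $H$ and then computing $\theta_2$ of that representative. The main technical obstacle will be this last subcase.
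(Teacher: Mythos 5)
Parts~(\ref{it:ex12a1}) and~(\ref{it:ex12b1}) of your proposal are correct and essentially identical to the paper's proof. For~(\ref{it:ex12a1}) you use the very same homomorphism $\psi(a_1)=\psi(b_1)=\sigma_1$, $\psi(a_2)=\psi(b_2)=g$ (and, like the paper, you only need the direction of Theorem~\ref{th:borsuk_braid}(\ref{it:borsuk_braid1}) that holds for arbitrary $G$, so the non-Abelianness of $\Sigma_n$ is harmless). For~(\ref{it:ex12b1}) the only cosmetic difference is that you use the exponent-sum homomorphism on all of $B_n(\rtwo)$, under which each $A_{i,j}$ of~(\ref{eq:defaij}) has value $2$, whereas the paper uses the evaluation homomorphism $\varepsilon$ on $P_n(\rtwo)$ with $\varepsilon(A_{i,j})=1$ together with its invariance under conjugation by $\sigma_1$ and $g$; both routes reduce the relator $c^2[a_1,b_1]$ to the parity contradiction $0=2(\text{odd})$.

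The genuine gap is in part~(\ref{it:ex12c1}), in exactly the subcase you yourself flag as the ``main technical obstacle'', namely $n\equiv 2\bmod 4$. There the whole burden of the proof is to show that $(\theta_{\tau_{\z_n}})_{\ab}(\delta)=\overline{0}$, and you do not prove it: you only announce a plan to find an explicit representative of $\delta$ by Reidemeister--Schreier rewriting. This is not a routine verification that can be left to the reader, since the relevant subgroup $H=\theta_2^{-1}(\z_n)$ has index $(n-1)!$ in $\pi_1(N_2)$, so an explicit rewriting argument uniform in $n$ is not realistic. The idea you are missing is cohomological: the covering $p\colon (M_2)_{\tau_{\z_n}}\to N_2$ has degree $(n-1)!$, so $p^{\ast}\colon H^2(N_2;\z)\to H^2((M_2)_{\tau_{\z_n}};\z)$ is multiplication by $(n-1)!$ between two copies of $\z_2$ and is therefore trivial for $n>2$; by the universal coefficient theorem, $p_{\ast}$ then annihilates the torsion of $H_1((M_2)_{\tau_{\z_n}})$, i.e.\ a representative $\widehat{\delta}$ of $\delta$ is sent by $p_{\#}$ into the commutator subgroup of $\pi_1(N_2)$. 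Hence $\theta_2(\widehat{\delta})$ lies in $A_n\cap\z_n$, and since the image of $\delta$ can only be $\overline{0}$ or $\overline{n/2}$, and $(1,n,\ldots,2)^{n/2}$ is an odd permutation when $n\equiv 2\bmod 4$, the image must be $\overline{0}$; Theorem~\ref{th:main} then concludes. Without an argument of this kind your part~(\ref{it:ex12c1}) is incomplete. (A minor additional remark: you only verify non-orientability of the orbit space for $n=6$, but this does not affect the logic, since if the orbit space were orientable Theorem~\ref{th:main} would already give the conclusion.)
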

	
	
	
	
\begin{remark}
If $n=2$, a stronger version of Proposition~\ref{prop:ex121} may be found in~\cite{Gon}. Note that the quadruple $(M_1, \Sigma_2 = \z_2, \tau_1; \mathbb{R}^2)$ does not have the Borsuk-Ulam property, while the quadruple $(M_2, \Sigma_2 = \z_2, \tau_2; \mathbb{R}^2)$ does have the Borsuk-Ulam property. 
\end{remark}
	
	\begin{proof}[Proof of Proposition~\ref{prop:ex121}]
		By construction, first note that for $i=1,2$, $\pi_1(M_{\tau_i})=\pi_1(N_i)$.
		\begin{enumerate}
			\item Let $\psi\colon\thinspace \pi_1(M_{\tau_1}) \to B_n(\rtwo)$ be the homomorphism defined on the generators of $\pi_1(M_{\tau_1})$ by $\psi (a_1)=\psi (b_1)=\sigma_1$ and $\psi (a_2)=\psi (b_2)=g$,
			%
			%
			where $g$ is as defined in Remarks~\ref{rems:gensbzn}(\ref{it:gensbzna}). Using~(\ref{eq:provenient1}) and~(\ref{eq:n1}), we may check that 
			$\pi\circ \psi=\theta_{\tau_1}$, and Corollary~\ref{cor:borsuk_braid_r2} implies that $(M_1, \Sigma_n, \tau_1; \mathbb{R}^2)$ does not have the Borsuk-Ulam property.
			
			\item Suppose on the contrary that there exists a homomorphism $\psi$ for which the lower triangle of the diagram~(\ref{diag:diag_equiv}) is commutative. From the lower short exact sequence of~(\ref{eq:diag_braids}) and~(\ref{eq:n2}), there exist $x,y,z \in P_n(\rtwo)$ such that $\psi(c)=x \sigma_1$, $\psi(a_1)=yg$ and $\psi(b_1)=zg$,  
			%
			%
			where $g$ is as defined in Remarks~\ref{rems:gensbzn}(\ref{it:gensbzna}). Taking the image of the relation of $\pi_1(M_{\tau_2})$ and using~(\ref{eq:defaij}), we obtain:
			\begin{equation}\label{eq:aux41}
				1= \psi \left( c^2 [a_1,b_1] \right)=x \sigma_1 x \sigma_1 y g z  y^{-1} g^{-1} z^{-1}=x \ldotp \sigma_1 x \sigma_1^{-1} \ldotp A_{1,2} \ldotp y  \ldotp g z g^{-1}\ldotp g y^{-1} g^{-1}\ldotp z^{-1}.
			\end{equation}
			Note that $\sigma_1 x \sigma_1^{-1}$, $g z g^{-1}$ and $g y^{-1} g^{-1}$ belong to $P_n(\rtwo)$ because $P_n(\rtwo)$ is a normal subgroup of $B_n(\rtwo)$. Let $\varepsilon\colon\thinspace P_{n}(\rtwo) \to \z$ be the evaluation homomorphism defined in the proof of Proposition~\ref{prop:3}.
			By~\cite[Lemma~3.1]{LiWu} and Theorem~\ref{th:bzn_rtwo}, we have  $\varepsilon(\sigma_1 A_{i,j} \sigma_1^{-1})=\varepsilon(g A_{i,j} g^{-1}) =\varepsilon(A_{i,j})=1$ for all $1 \leq i < j \leq n$. So applying $\varepsilon$ to~(\ref{eq:aux41}), we obtain $0=2 \varepsilon(x) + 1$ in $\z$, which yields a contradiction.
			
			\item If $n\not\equiv 2 \bmod{4}$, the result is clear by Theorem~\ref{th:main}. So let us assume that $n\equiv 2 \bmod{4}$, so $n = 4k+2$ for some $k \in \z$. 
We claim that the covering space $M_2$ is non-orientable, therefore $M_{\tau_{\z_n}}$ is non-orientable. To see that $M_2$ is non-orientable, for $i=1,\ldots, 2k+1$, the transposition $(1,2)$ is conjugate in $\Sigma_{4k+2}$ to the transposition $(i,2k+1+i)$, so there exists $\lambda_i\in \Sigma_{4k+2}$ such that $\lambda_i(1,2)\lambda_i^{-1}=(i,2k+1+i)$.
			For $i=1,\ldots, 2k+1$, let $\widetilde{\lambda_i}\in \pi_1(N_2)$ be such that $\theta_2(\widetilde{\lambda_i})= \lambda_i$.
Now let $w=\left( \prod_{i=1}^{2k+1} \lambda_i c \lambda_i^{-1} \right) a_1^{2k+1}$. Since $c$ represents a non-orientable loop in $N_2$, $2k+1$ is odd and $a_1$ is orientable, it follows that $w$ represents a non-orientable loop in $N_2$. 
			One may check that $w\in \ker{\theta_2}$, from which it follows that $M_2$ is non-orientable, and thus $(M_2)_{\tau_{\z_n}}$ is also non-orientable.
We claim that the homomorphism $(\theta_{\tau_{\z_n}})_\ab \colon\thinspace H_1( (M_2)_{\tau_{\z_n}},\z ) \to \z_{4k+2}$ induced by $\theta_{2}$ satisfies $(\theta_{\tau_{\z_n}})_\ab(\delta)=\overline{0}$, where $\delta$ is the unique non-trivial torsion element of $H_1( (M_2)_{\tau_{\z_n}})$. This being the case, it follows from Theorem~\ref{th:main} that the Borsuk-Ulam property does not hold for the quadruple $(M_2, \z_n, \tau_{\z_n}; \mathbb{R}^2)$. To prove the claim, if $p\colon\thinspace (M_2)_{\tau_{\z_n}} \to N_2$ is the covering map such that $p \circ p_{\tau_{\z_n}} = p_{\tau_2}$, it suffices to show that $\delta \in \ker{(p_\#)_{Ab}}$, where $(p_\#)_{Ab} \colon\thinspace \pi_1((M_2)_{\tau_{\z_n}})_{Ab} \to \pi_1(N_2)_{Ab}$ (or equivalently the homomorphism $p_\ast\colon\thinspace H_1((M_2)_{\tau_{\z_n}}) \to H_1(N_2)$ on the level of first homology) is the homomorphism induced by $p$. To see this, observe that the induced homomorphism $p^\ast\colon\thinspace H^2(N_2) \to H^2((M_2)_{\tau_{\z_n}})$ in cohomology with integral coefficients is trivial since $H^2(N_2)$ and $H^2((M_2)_{\tau_{\z_n}})$ are isomorphic to $\z_2$, and the homomorphism $p^{\ast}$ is multiplication by $(4k+1)!$, which is the cardinality of $\Sigma_n$ divided by $n$. Since $n>2$, the homomorphism $p^{\ast}$ is trivial. Using the universal coefficient formula, it follows that the torsion of $H_1( (M_2)_{\tau_{\z_n}} )$ is sent to  zero, and the result follows from  Theorem~\ref{th:main}.\qedhere
\end{enumerate}
\end{proof}

An immediate consequence of the Proposition~\ref{prop:ex121}(\ref{it:ex12a1}) is the following.

\begin{corollary}
	Given any finite group $G$, there exists a quadruple $(M,G, \tau;
	\mathbb{R}^2)$ for which the Borsuk-Ulam property does not hold.
\end{corollary}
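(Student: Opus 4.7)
The plan is to reduce the general case to that of the symmetric group $\Sigma_n$ that is already treated in Proposition~\ref{prop:ex121}(\ref{it:ex12a1}), using Cayley's theorem to realise $G$ as a subgroup of some $\Sigma_n$ and then restricting the action. Let $G$ be a non-trivial finite group, set $n = \lvert G \rvert \geq 2$, and use the left regular representation to obtain a monomorphism $i\colon\thinspace G \to \Sigma_n$.

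By Proposition~\ref{prop:ex121}(\ref{it:ex12a1}) (valid for $n \geq 3$; the case $n = 2$ is covered by the remark that immediately follows it, which appeals to~\cite{Gon}), there exists a compact surface without boundary $M_1$ together with a free action $\tau_1\colon\thinspace \Sigma_n \times M_1 \to M_1$ such that the quadruple $(M_1,\Sigma_n,\tau_1;\rtwo)$ does not satisfy the Borsuk-Ulam property. Since $\rtwo$ is contractible, $[M_1,\rtwo]$ is a singleton, so this failure provides a representative map $f\colon\thinspace M_1 \to \rtwo$ whose restriction to each $\Sigma_n$-orbit is injective.

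Next, I would define $\tau\colon\thinspace G \times M_1 \to M_1$ by $\tau(g,x) = \tau_1(i(g),x)$. Since $\tau_1$ is free and $i$ is injective, $\tau$ is a free action of $G$ on $M_1$. For any $x\in M_1$ and any $g_1,g_2\in G$ with $g_1\neq g_2$, the points $\tau(g_1,x)$ and $\tau(g_2,x)$ are two distinct elements of the $\Sigma_n$-orbit of $x$ under $\tau_1$, hence $f(\tau(g_1,x))\neq f(\tau(g_2,x))$ by the non-collapsing property of $f$. Consequently the unique free homotopy class in $[M_1,\rtwo]$ fails to have the Borsuk-Ulam property with respect to $(G,\tau)$, and the quadruple $(M_1,G,\tau;\rtwo)$ is the desired witness.

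There is no genuine obstacle here: the only content is the observation that $G$-orbits sit inside $\Sigma_n$-orbits, so any map separating the latter a fortiori separates the former. The potential concern — that passing to a subgroup might lose the counterexample — is ruled out immediately by this containment, and Cayley's theorem supplies a subgroup embedding for every finite group, completing the argument.
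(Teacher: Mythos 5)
Your proof is correct and is exactly the reduction the paper has in mind: the corollary is stated as an immediate consequence of Proposition~\ref{prop:ex121}(\ref{it:ex12a1}), the point being precisely that the left regular representation embeds $G$ in $\Sigma_{n}$ with $n=\lvert G\rvert$, the restricted action is free, and any map separating $\Sigma_n$-orbits separates the smaller $G$-orbits. Your handling of the case $n=2$ via the remark following the proposition is also consistent with the paper.
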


Given a finite group $G$, it is not true in general that there exists
a quadruple $(M,G, \tau; \mathbb{R}^2)$ satisfying the Borsuk-Ulam
property. Theorem~\ref{th:main} provides many examples of this. For
example, if $G$ is cyclic of odd order, the quadruple $(M,G,\tau;
\mathbb{R}^2)$ does not satisfy the Borsuk-Ulam property. It is thus a
natural problem to classify the finite groups $G$ for which there
exists a quadruple $(M,G,\tau; \mathbb{R}^2)$ satisfying the Borsuk-Ulam
property.

	\subsection*{Acknowledgements}
	
	The work on this paper started during the Ph.D thesis~\cite{Laass} of the third author who was supported by the CNPq project n\textsuperscript{o} 140836 and the Capes/COFECUB project n\textsuperscript{o} 12693/13-8, and was completed during his Postdoctoral Internship at IME--USP from March 2020 to August 2021 that was supported by the Capes/INCTMat project n\textsuperscript{o} 88887.136371/2017-00-465591/2014-0. The first author is partially supported by the Projeto Tem\'atico FAPESP, grant n\textsuperscript{o} 2016/24707-4: {\it Topologia Alg\'ebrica, Geom\'etrica e Diferencial}.


\end{document}